\newtheorem{thm}{Theorem}[section]
\newtheorem{cor}[thm]{Corollary}
\newtheorem{lem}[thm]{Lemma}
\theoremstyle{definition}
\newtheorem{defn}[thm]{Definition}
\newtheorem{example}[thm]{Example}
\newtheorem{rem}[thm]{Remark}
\theoremstyle{remark}
\numberwithin{equation}{section}
\newcommand{\bp}{{\mathbf p}}
\newcommand{\bq}{{\mathbf q}}
\begin{document}

\title{Ergodicity for $p$-adic continued fraction algorithms}

\author{Hui Rao and Shin-ichi Yasutomi$^\dag$}

\maketitle

\footnote[0]{$\dag$ The correspondence author.}
\footnote[0]{2010 {\it Mathematics Subject Classification}. Primary 11J70; Secondary 11S82.}
\footnote[0]{{\it Key words and phrases.} multidimensional $p$-adic continued fraction algorithms, ergodic theory, fibred system}

\begin{abstract}
Following Schweiger's  generalization of multidimensional continued fraction algorithms, we consider a very large family of
$p$-adic multidimensional continued fraction algorithms, which include Schneider's algorithm,
Ruban's algorithms, and the $p$-adic Jacobi-Perron algorithm as special cases.
The main result  is to show that all the transformations in the family
are  ergodic with respect to the Haar measure.
\end{abstract}

\section{Introduction}
The classical continued fraction algorithm and its generalizations have been widely studied by many authors.
Schweiger (\cite{S}) provided a generalization of multidimensional continued fraction algorithms
as piecewise fractional linear maps, and studied its relation with dynamical system, ergodic theory and number theory.
As early as 1970's, there are several works on $p$-adic continued fraction algorithms, for example,
Schneider \cite{Sc} and Ruban \cite{R1}. Before stating these algorithms, let us introduce some notation first.

Let $p$ be a prime number and $\Bbb{Q}_p$(resp., $\Bbb{Z}_p$) be
the closure of  $\Bbb{Q}$(resp., $\Bbb{Z}$) with respect to  the $p$-adic topology.
For $x\in \Bbb{Z}_p$,  $ord_p(x)$ denotes the highest power of $p$ by which
$x$ is divided to be a $p$-adic integer.
It is well known that every $\alpha\in {\Bbb Q}_p\setminus\{0\}$ can be written as
\begin{align*}
\alpha=\sum_{n\in \Bbb{Z}}c_np^n,  \ \  c_n\in \{0,1,\ldots,p-1\},
\end{align*}
where $c_n=0$ for $-n$ sufficiently large.
We define the order and norm of $\alpha$ to be
$ord_p(\alpha):=\min\{n |c_n\ne 0\}$ and $|\alpha|_p:=p^{-ord_p(\alpha)}$
respectively, and set the residue class, integral part and fractional part of $\alpha$ to be
$$
\omega_p(\alpha):=c_0, \lfloor \alpha \rfloor_p:=\Sigma_{n\in \Bbb{Z}_{\leq 0}}c_np^n
\text{\ and\ }\langle \alpha \rangle_p:=\Sigma_{n\in \Bbb{Z}_{> 0}}c_np^n,
$$
respectively. By convention we set $ord_p(0)=\infty$ and $|0|_p=0$.

Schneider \cite{Sc}   introduced  the following $p$-adic continued fraction algorithm.
 Define $T_0:~p{\Bbb Z}_p\to p{\Bbb Z}_p$  as
\begin{align*}
T_0(x)=\dfrac{p^{ord_p(x)}}{x}-\omega_p\left(\dfrac{p^{ord_p(x)}}{x}\right).
\end{align*}
For $\xi\in p{\Bbb Z}_p$, denote  $\xi_n:=T_0^{n-1}(\xi)$  and
$a_n:=\omega_p\left(\dfrac{p^{ord_p(\xi_{n})}}{\xi_{n}}\right)$
for $n\in {\Bbb Z}_{>0}$.
Then  $\xi$ has the following $p$-adic continued fractional expansion (\cite{Sc})
\begin{align*}
\xi=\cfrac{p^{ord_p(\xi_{1})}}{a_1+\cfrac{p^{ord_p(\xi_{2})}}{a_2+\cfrac{p^{ord_p(\xi_{3})}}{a_3+\ldots}}}.
\end{align*}

Ruban \cite{R1}  considered  another algorithm given by
  the transformation $T_{\infty}: ~p{\Bbb Z}_p\to p{\Bbb Z}_p$    defined as
\begin{align*}
T_{\infty}(x)=\dfrac{1}{x}-\left\lfloor\dfrac{1}{x}\right\rfloor_p.
\end{align*}
Let $\xi\in p{\Bbb Z}_p$, $\xi_n=T_{\infty}^{n-1}(\xi)$ and
$a'_n:=\left\lfloor\dfrac{1}{\xi_{n}}\right\rfloor_p$
for $n\in {\Bbb Z}_{>0}$.
Then
\begin{align*}
\xi=\cfrac{1}{a'_1+\cfrac{1}{a'_2+\cfrac{1}{a'_3+\ldots}}}.
\end{align*}

It is well known that for the classical Gaussian map, the ergodic measure is $\ln(1+x)dx$.
In the $p$-adic case, Ruban \cite{R1} proved the ergodicity of $T_{\infty}$ and
Hirsh and Washington \cite{HW} confirmed the ergodicity of $T_{0}$, with respect to the Haar measure on $p{\Bbb Z_p}$.
Han\v{c}l, Ja\v{s}\v{s}ov\'a, Lertchoosakul and Nair \cite{HJLN} showed the mixing property of $T_{0}$.
Ruban \cite{R2,R3} also  proved the ergodicity of the $p$-adic version of Jacobi-Perron algorithm.

In this paper, following Schweiger's  generalization of multidimensional continued fraction algorithms, we consider a very large family of $p$-adic multidimensional continued fraction algorithms, which include Schneider's algorithm,  Ruban's algorithms, and the $p$-adic Jacobi-Perron algorithm as special cases (see Section 3).
The main result of the present paper is to show that all the transformations in the family
are  ergodic with respect to the Haar measure, and hence put the results of   \cite{R2,R3,HW}
in a uniform frame work.
Furthermore, we show that those transformations are mixing.

We note that there are other kind of generalizations of $p$-adic multidimensional continued fraction algorithms
which are not contained in our family,  for example, \cite{STY} and \cite{MT}.

The paper is organized as follows. In Section 2, we introduce $m$-dimensional $p$-adic linear fractional transformations
and discuss its basic properties. In Section 3, we introduce $m$-dimensional $p$-adic
continued fractional systems and we associate a transformation $T$ to each system; several families of examples are given.
In Section 4, we give several simple lemmas. The ergodicity of the transformation $T$ is proved in Section 5.

\section{Linear fractional transformation}
First, we review Schweiger's  generalization of multidimensional continued fraction algorithms  on $\mathbb R^n$ \cite{S}.
We say $\{A_\lambda\mid \lambda\in\Lambda\}$ is
a \emph{partition} of $A$, if $A=\bigcup_{\lambda\in \Lambda} A_\lambda$, and the union is disjoint.

\begin{defn}\label{Schweiger1} (\cite{S})
A pair $(B,T)$, where $B$ is a set and $T:B\to B$ is a map, is called {\it a fibred system} if
there exists a partition $\{B(i):i\in I\}$ of the set $B$, where $I$ is
finite  or countable, such that the restriction of $T$ to $B(i)$ is injective.
\end{defn}

\begin{defn}\label{Schweiger2} (\cite{S})
Let $n\in {\Bbb Z}_{>0}$.
A fibred system  $(B,T)$ is  called  a {\it piecewise fractional linear or a multidimensional} continued fraction if
\begin{enumerate}
\item $B$ is a subset of Euclidean space ${\Bbb R}^n$.
\item For every $k\in I$, there is an invertible matrix $\alpha_k=((A_{i,j}))$,
$i,j\in\{0,1,\dots, n\}$ such that  the restriction of $T$ to $B(k)$ can be written as
 $(y_1,\ldots,y_n)=T(x_1,\ldots,x_n)$   where
\begin{align*}
y_i=\dfrac{A_{i,0}+\sum_{j=1}^nA_{i,j}x_j}{A_{0,0}+\sum_{j=1}^nA_{0,j}x_j}.
\end{align*}
\end{enumerate}
\end{defn}

Among many others, Schweiger \cite{S} obtained the weak convergence of continued fractional expansion arising from the above algorithms, but it is difficult to show  ergodicity of the transformations $T$.

In this paper,  as an analogue of Schweiger \cite{S}, we introduce a large family of piecewise fractional linear maps in $p$-adic spaces; on contrary to the real case,  we will confirm the  ergodicity of such maps.
We define
\begin{equation}
\begin{array}{rl}
D_m:=\{&(x_1,\ldots,x_m) \in {\Bbb Q}_p^m\mid 
  1,x_1,\ldots,x_m \text{ are linearly independent over ${\Bbb Z}$} \}
\end{array}
\end{equation}
and
\begin{equation}
D_m':=D_m\cap   (p{\Bbb Z}_p)^m.
\end{equation}

\begin{rem} We remark that if $(x_1,\dots, x_m)\in D_m$, then  none of  $x_j$ can
  be a rational number, in particular, $x_j\neq 0$.
\end{rem}

We set $\mu$ to be  the Haar measure on ${\Bbb Q_p}$ (as an addition group), precisely, $\mu$ is  the Borel measure satisfying
$\mu(pa+p^n\Bbb Z_p)=\dfrac{1}{p^{n-1}}$.    We denote
  $\mu_m=\mu^m$ which is the product measure on $(p\Bbb{Z}_p)^m$. It is easy to show that (see Lemma \ref{lem:null})
$$\mu_m((p{\Bbb Z}_p)^m\backslash D_m')=0.$$
  Therefore, if $T$ is a transformation of $(p\Bbb{Z}_p)^m$ such that
$D_m'$ is $T$-invariant, to study the ergodicity of $T$ with respect to $\mu$, we only need to study the restriction
of $T$ on $D_m'$.

\begin{defn}\label{d-1-1}
Let  $i\in\{1,\dots, m\}$, $\sigma$ be a permutation   on the set $\{1,2,\ldots,m\}$, and let
$\bp=(p_1,\dots, p_m)\in ({\Bbb Q}^{\times})^m, \bq=(q_1,\dots, q_m)\in \Bbb Q^m$.
We define  $F:~D_m\to D_m$ by
$F(x_1,\ldots,x_m)=(y_1,\ldots,y_m)$, where for $k=1,2,\ldots, m$,
$$
y_k=\begin{cases}{p_k}/{x_i}-q_k&\text{if $k=\sigma^{-1}(i)$},\\
{p_kx_{\sigma(k)}}/{x_i}-q_k &\text{if $k\ne\sigma^{-1}(i)$};\end{cases}$$
We call $F$  the \it{$m$-dimensional linear fractional    transformation (LFT)  with parameter} $(i,\sigma, \bp,\bq)$.
\end{defn}

\begin{rem}\label{rem:range}  Notice that
 $1,x_1,\ldots,x_m$ are linearly independent over ${\Bbb Z}$ implies that
 $1/x_i, $ $x_1/x_i,$ $\ldots,x_m/x_i$  also do, so $F(x_1,\ldots,x_m)$ defined above belongs to $D_m$.
\end{rem}

\begin{lem}\label{l-2}
Let  $F$ be an $m$-dimensional LFT  with parameter $(i,\sigma,\bp,\bq)$.
Then $F$  is a bijection on  $D_m$; precisely,
 if we denote \\$(x_1,\dots,x_m)=F^{-1}(y_1,\ldots,y_m)$, then
\begin{equation}\label{eq:x_k}
x_k=\begin{cases}\dfrac{p_s}{y_s+q_s}&\text{if $k=i$},\\
 \dfrac{p_s(y_{t}+q_t)}{p_t(y_s+q_s)} &\text{if $k\ne i$},\end{cases}
\end{equation}
where $s=\sigma^{-1}(i),t=\sigma^{-1}(k)$.
\end{lem}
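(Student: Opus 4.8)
The plan is to verify directly that the formulas in \eqref{eq:x_k} define a two-sided inverse of $F$, which establishes bijectivity. First I would record what $F$ does to the special coordinate. Writing $(y_1,\dots,y_m)=F(x_1,\dots,x_m)$ and setting $s=\sigma^{-1}(i)$, the definition gives $y_s=p_s/x_i-q_s$, so $y_s+q_s=p_s/x_i$ and hence $x_i=p_s/(y_s+q_s)$; note that $y_s+q_s\neq 0$ because $x_i\neq 0$ (Remark after the definition of $D_m$, or simply because $(x_1,\dots,x_m)\in D_m$). This is exactly the $k=i$ case of \eqref{eq:x_k}. For $k\neq i$, write $t=\sigma^{-1}(k)$; since $k\neq i$ we have $t\neq s$, so the defining formula gives $y_t=p_t x_{\sigma(t)}/x_i-q_t=p_t x_k/x_i-q_t$, whence $x_k=x_i(y_t+q_t)/p_t$. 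Substituting the value of $x_i$ just found yields $x_k=p_s(y_t+q_t)/\bigl(p_t(y_s+q_s)\bigr)$, which is the $k\neq i$ case of \eqref{eq:x_k}.

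This shows that any preimage under $F$ must be given by \eqref{eq:x_k}, so $F$ is injective; it remains to check surjectivity, i.e.\ that for $(y_1,\dots,y_m)\in D_m$ the point $(x_1,\dots,x_m)$ defined by \eqref{eq:x_k} lies in $D_m$ and satisfies $F(x_1,\dots,x_m)=(y_1,\dots,y_m)$. For the membership in $D_m$: since $(y_1,\dots,y_m)\in D_m$, Remark~\ref{rem:range} applied to the index $s$ tells us that $1/(y_s+q_s),\ (y_1+q_1)/(y_s+q_s),\dots,(y_m+q_m)/(y_s+q_s)$ are linearly independent over $\mathbb Z$ after absorbing the shifts by $q_t$; more carefully, one checks that a $\mathbb Z$-linear relation among $1,x_1,\dots,x_m$ with the $x_k$ as in \eqref{eq:x_k} clears denominators to a $\mathbb Q$-linear, hence $\mathbb Z$-linear relation among $1,y_1,\dots,y_m$, contradicting $(y_1,\dots,y_m)\in D_m$. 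The computation $F(x_1,\dots,x_m)=(y_1,\dots,y_m)$ is then the same algebra as above run in reverse: plugging \eqref{eq:x_k} into the two cases of Definition~\ref{d-1-1} and using $\sigma(s)=i$, $\sigma(t)=k$ recovers $y_s$ and each $y_k$.

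The only genuinely delicate point is the bookkeeping with the permutation $\sigma$ and the index translations $s=\sigma^{-1}(i)$, $t=\sigma^{-1}(k)$: one must be careful that $k=i$ corresponds to $s$, that $k\neq i$ forces $t\neq s$, and that $\sigma(\sigma^{-1}(i))=i$ is used in the right place. None of this is hard, but it is the part where a sign or an index slip would break the argument; everything else is elementary manipulation of nonzero $p$-adic numbers, legitimate because $x_i\neq 0$ and $y_s+q_s\neq 0$ throughout. I would present the forward computation and the verification $F\circ F^{-1}=\mathrm{id}$ explicitly, and note that injectivity already follows from the forward computation, so that the two together give the bijection claimed.
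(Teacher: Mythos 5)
Your proposal is correct and takes essentially the same route as the paper, whose entire proof consists of noting that $y_s+q_s\neq 0$ (since $y_s$, being a coordinate of a point of $D_m$, cannot be rational) and then performing exactly the easy calculation you spell out, so your write-up is just a more detailed version of that one-line argument. The only small point to keep straight is that in the surjectivity direction, where $(y_1,\dots,y_m)\in D_m$ is arbitrary, the nonvanishing of $y_s+q_s$ should be justified by the irrationality of $y_s$ (as the paper does) rather than by $x_i\neq 0$, which is only available once a preimage is assumed.
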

\begin{proof}
 Since $y_s$ is not a rational number, so $y_s+q_s\neq0$,  and \eqref{eq:x_k} holds by a easy calculation.
The lemma is proved.
\end{proof}

We are interested in the $m$-dimensional LFT $F$  satisfying $F^{-1}(D_m')\subset D_m'$.
To this end, we pose the following  conditions.

From now on, since $p$ is fixed, we will abbreviate $ord_p(x)$
by $ord(x)$.

\begin{defn}\label{d-1}
Let $F$ be  an $m$-dimensional LFT  with parameter $(i,\sigma, \bp,\bq)$.
 Denote $s=\sigma^{-1}(i)$. We say $F$ is \textit{hyperbolic}, if   the following conditions hold:
\begin{enumerate}
\item[(i)] For every $k\in \{1,\dots, m\}$,  $p_k\ne 0$ and $ord(p_k)\geq  0$;

\item[(ii)] $ord(q_s)\leq  0$ and $ord(p_s/q_s)>0$ (note that $q_s\neq 0$);

\item[(iii)] If  $k\neq s$ and  $ord(q_k)\leq  0$,  then  $ord(p_s/q_s)-ord(p_k/q_k)>0$;

\item[(iv)] If  $k\neq s$ and  $ord(q_k)> 0$,  then  $ord(p_s/q_s)-ord(p_k)>0$.
\end{enumerate}
\end{defn}

(If $q_k=0$, we make the convention that $ord(p_k/q_k)=-\infty$.) Roughly speaking, if $p_k$ are small and $q_k$ are big in the $p$-norm, then $F$ is hyperbolic.
The following is a sufficient condition for a LFT to be hyperbolic.

\begin{lem}\label{l-2-2}
Let  $F$ be an  $m$-dimensional LFT  with parameter $(i,\sigma, \bp,\bq)$ with
$ord(q_s)\leq  0$ where $s=\sigma^{-1}(i)$.
Suppose that for every $k\in \{1,\dots, m\}$,
 $p_k\ne 0$, $ord(p_k)\geq  0$,
and if  $ord(q_k) > 0$, then  $ord(p_k)=0$.
If there exists   $\bar x\in D_m'$
such that
$F(\bar x )\in D_m'$,
then  $F$ is   hyperbolic.
\end{lem}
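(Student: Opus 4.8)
The strategy is to reverse-engineer Definition \ref{d-1}: we must produce the order inequalities (i)--(iv), and the only new ingredient available is the witness $\bar x$. So write $\bar x=(x_1,\dots,x_m)$ and $(y_1,\dots,y_m)=F(\bar x)$; since both tuples lie in $D_m'\subset(p\Bbb Z_p)^m$ we have $ord(x_j)\ge 1$ and $ord(y_k)\ge 1$ for all $j,k$, and these are the inequalities everything will be squeezed out of. Condition (i) is literally part of the hypotheses, so there is nothing to check there.

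The key step is the identity coming from the $s$-th coordinate of $F$, where $s=\sigma^{-1}(i)$. Rearranging $y_s=p_s/x_i-q_s$ gives $p_s/x_i=y_s+q_s$; since $ord(y_s)\ge 1>0\ge ord(q_s)$, the two orders differ, so the ultrametric (isosceles-triangle) law forces $ord(p_s/x_i)=ord(q_s)$, that is, $ord(p_s)=ord(x_i)+ord(q_s)$, and hence $ord(p_s/q_s)=ord(x_i)\ge 1$. Combined with the standing hypothesis $ord(q_s)\le 0$, this is exactly condition (ii).

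For an index $k\ne s$ I would split into the same two cases as the definition. If $ord(q_k)\le 0$, apply the identical manipulation to $y_k=p_kx_{\sigma(k)}/x_i-q_k$: again $ord(y_k)\ge 1>0\ge ord(q_k)$, so $ord(p_kx_{\sigma(k)}/x_i)=ord(q_k)$, giving $ord(p_k)+ord(x_{\sigma(k)})-ord(x_i)=ord(q_k)$ and therefore $ord(p_k/q_k)=ord(x_i)-ord(x_{\sigma(k)})$; subtracting this from $ord(p_s/q_s)=ord(x_i)$ leaves $ord(x_{\sigma(k)})\ge 1>0$, which is condition (iii). If $ord(q_k)>0$, the hypothesis forces $ord(p_k)=0$, so $ord(p_s/q_s)-ord(p_k)=ord(x_i)\ge 1>0$, which is condition (iv). This exhausts the four conditions and proves $F$ hyperbolic.

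The argument is essentially bookkeeping, so the only real difficulty is minor: one must check that in every place where $ord$ of a sum is replaced by the minimum of the two summand orders, the two orders are genuinely unequal --- which holds precisely because $ord(y_k)\ge 1$ while $ord(q_k)\le 0$ in the cases where this replacement is used --- and one must keep straight which of $x_i$ and $x_{\sigma(k)}$ enters each order identity, remembering the distinguished role of $s=\sigma^{-1}(i)$. (Note also that $ord(q_k)\le 0$ forces $q_k\neq 0$, so $ord(p_k/q_k)$ in case (iii) is well defined, while case (iv) only ever refers to $ord(p_k)$, so the convention $ord(p_k/q_k)=-\infty$ for $q_k=0$ causes no trouble.)
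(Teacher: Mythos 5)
Your proof is correct and follows essentially the same route as the paper: both extract the key valuation identities from the witness $\bar x$ via the ultrametric equality $ord(\bar y_k+q_k)=ord(q_k)$ (valid since $ord(\bar y_k)\ge 1>0\ge ord(q_k)$), the only cosmetic difference being that you rearrange the forward formula $y_k=p_kx_{\sigma(k)}/x_i-q_k$ while the paper plugs into the inverse formula of Lemma~\ref{l-2} and reads off $ord(\bar x_k)>0$. Your handling of (i) as hypothesis and of (iv) via $ord(p_k)=0$ matches the paper's observation that (ii) implies (iv) under the stated assumptions.
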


\begin{proof}
  Clearly $F$ satisfies the conditions (i); moreover, under our assumption,   (ii) implies (iv) in Definition \ref{d-1}.
We will show that $F$ satisfies the conditions (ii) and (iii).
Write $\bar x=(\bar x_1,\dots, \bar x_m)$ and
$(\bar y_1,\dots, \bar y_m)=F(\bar x)$, then  $(\bar y_1,\dots, \bar y_m)\in D_m'$.

Take $k\ne i$, then by Lemma \ref{l-2},
$\bar x_k=\dfrac{p_s(\bar y_{t}+q_t)}{p_t(\bar y_s+q_s)}$
where $s=\sigma^{-1}(i),t=\sigma^{-1}(k)$.
By considering the valuations of both sides for the case of $ord(q_t)\leq 0$ we have
$$ord(p_s)-ord(q_s)+ord(q_t)-ord(p_t)=ord(\bar x_k)>0,$$
which verifies condition (iii). If $ord(q_t)>0$, we get condition (iii) easily. 
Take $k=i$,  a similar argument shows that condition (ii) is fulfilled. The lemma is proved.
\end{proof}

\begin{example}  Let $F$ be a hyperbolic one dimensional LFT with parameter $(i,\sigma,\bp,\bq)$, then
 we have  $i=1$, $\sigma$ is the identity map, and
 $(\bp,\bq)=(p_1,q_1)$. Therefore
 $$
 F(x)=\frac{p_1}{x}-q_1.
 $$
A $p$-adic continued fractional algorithm is usually related to a countable family of such transformations,
which is the  concern of the next section.
\end{example}

By Lemma \ref{l-2}, $F^{-1}$ is well-defined on $D_m'$ by   \eqref{eq:x_k}.
If $F$ is a hyperbolic $m$-dimensional LFT,   we  can extend $F^{-1}$ to $(p{\Bbb Z}_p)^m$ by the same formula  \eqref{eq:x_k}.

\begin{lem}\label{l0}
Let  $F$ be a hyperbolic $m$-dimensional LFT with parameter $(i,\sigma,\bp,\bq)$. Then
   $F^{-1}((p{\Bbb Z}_p)^m)\subset (p{\Bbb Z}_p)^m$ and  $F^{-1}(D_m')\subset D_m'$.
\end{lem}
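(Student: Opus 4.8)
The plan is to verify the two inclusions directly from the explicit formula \eqref{eq:x_k} for $F^{-1}$, using the hyperbolicity conditions (i)--(iv) to control $p$-adic valuations. Fix $(y_1,\dots,y_m)\in(p{\Bbb Z}_p)^m$, so $ord(y_k)\ge 1$ for all $k$, and write $(x_1,\dots,x_m)=F^{-1}(y_1,\dots,y_m)$ using \eqref{eq:x_k}. The key elementary observation I would use repeatedly is: for any $r\in{\Bbb Q}^\times$ and $y\in p{\Bbb Z}_p$, we have $ord(y+r)=\min(ord(y),ord(r))$ whenever $ord(y)\neq ord(r)$, and in particular if $ord(r)\le 0$ then $ord(y)\ge 1>0\ge ord(r)$, so $ord(y+r)=ord(r)\le 0$; whereas if $ord(r)>0$ then $ord(y+r)\ge 1$. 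So I would split each $q_k$ into the two cases $ord(q_k)\le 0$ and $ord(q_k)>0$, exactly as conditions (iii)--(iv) do.

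First I would handle the coordinate $x_i=p_s/(y_s+q_s)$, where $s=\sigma^{-1}(i)$. By (ii), $ord(q_s)\le 0$, so by the observation above $ord(y_s+q_s)=ord(q_s)\le 0$, hence $ord(x_i)=ord(p_s)-ord(q_s)=ord(p_s/q_s)>0$ again by (ii); thus $x_i\in p{\Bbb Z}_p$. Next, for $k\neq i$, set $t=\sigma^{-1}(k)$ and use $x_k=\dfrac{p_s(y_t+q_t)}{p_t(y_s+q_s)}$. The denominator valuation is $ord(p_t)+ord(q_s)$ as before. For the numerator I would split: if $ord(q_t)\le 0$ then $ord(y_t+q_t)=ord(q_t)$, giving $ord(x_k)=ord(p_s/q_s)-ord(p_t/q_t)>0$ by (iii); if $ord(q_t)>0$ then $ord(y_t+q_t)\ge 1$, giving $ord(x_k)\ge ord(p_s)-ord(q_s)+1-ord(p_t)=ord(p_s/q_s)-ord(p_t)+1>0$ by (iv) (using $ord(p_s)\ge 0$ from (i) and the strict inequality in (iv)). In every case $ord(x_k)>0$, i.e.\ $x_k\in p{\Bbb Z}_p$, which proves $F^{-1}((p{\Bbb Z}_p)^m)\subset(p{\Bbb Z}_p)^m$.

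For the second inclusion, let $(y_1,\dots,y_m)\in D_m'$; then in particular $(y_1,\dots,y_m)\in(p{\Bbb Z}_p)^m$, so by what we just showed $F^{-1}(y_1,\dots,y_m)\in(p{\Bbb Z}_p)^m$. It remains to see that the image lies in $D_m$, i.e.\ that $1,x_1,\dots,x_m$ are linearly independent over ${\Bbb Z}$. But on $D_m$, $F$ is a bijection by Lemma \ref{l-2}, and Remark \ref{rem:range} already records that $F$ maps $D_m$ into $D_m$; reading Lemma \ref{l-2} backwards, $F^{-1}$ defined by \eqref{eq:x_k} maps $D_m$ into $D_m$ as well (linear independence of $1,x_1,\dots,x_m$ is equivalent to that of $1,1/x_i,x_1/x_i,\dots$, and \eqref{eq:x_k} just expresses the $x_k$ as the corresponding rational functions of the $y$'s). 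Combining, $F^{-1}(D_m')=F^{-1}(D_m\cap(p{\Bbb Z}_p)^m)\subset D_m\cap(p{\Bbb Z}_p)^m=D_m'$.

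The only real point requiring care is the bookkeeping of valuations in the numerator case $ord(q_t)>0$, where one must remember to invoke $ord(p_t)=0$ — wait, that is the hypothesis of Lemma \ref{l-2-2}, not of Definition \ref{d-1}; in the present lemma one instead uses condition (iv) directly together with $ord(p_t)\ge 0$, and the slack comes from $ord(y_t+q_t)\ge 1$. I expect this case distinction, and making sure each strict inequality in (ii)--(iv) is used in the right place, to be the main (though still routine) obstacle; everything else is formula-chasing.
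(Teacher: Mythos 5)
Your proposal is correct and follows essentially the same route as the paper's proof: the same valuation computation from the explicit formula \eqref{eq:x_k}, with the same case split on the sign of $ord(q_t)$ using conditions (ii)--(iv), and the second inclusion reduced to preservation of $\mathbb{Z}$-linear independence (which the paper states directly rather than via Lemma \ref{l-2}). No gaps.
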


\begin{proof}
Let     $(y_1,\ldots,y_m)\in  (p{\Bbb Z}_p)^m$  and   denote  $(x_1,\ldots,x_m)=F^{-1}(y_1,\ldots,y_m)$.
Then, $x_i=\dfrac{p_s}{y_s+q_s}$, where  $s=\sigma^{-1}(i)$, so
 $ord(x_i)=ord(p_s)-ord(q_s)>0.$
 Pick $k\ne i$  and denote $t=\sigma^{-1}(k)$,  then   $x_k=\dfrac{p_s(y_{t}+q_t)}{p_t(y_s+q_s)}$.
 If $ord(q_t)\leq 0$, then
$$ord(x_k)=ord(p_s)-ord(q_s)+ord(q_t)-ord(p_t)>0;$$
if $ord(q_t)>0$, then
$$ord(x_k)>ord(p_s)-ord(q_s)-ord(p_t)>0.$$
Therefore, $(x_1,\ldots,x_m)\in  (p{\Bbb Z}_p)^m$, which proves the first assertion.
The second assertion holds
 since that $1,y_1,\dots, y_m$ are
linearly independent over $\mathbb Z$ implies that $1,x_1,\dots, x_m$ also do.
\end{proof}

From now on, we will always regard $F^{-1}$ as a transformation on $(p{\Bbb Z}_p)^m$.
For $x=(x_1,\dots, x_m)\in (p{\Bbb Z}_p)^m$, we define $|x|_p=\max_{1\leq j\leq m}|x_j|$.

\begin{lem}[\textbf{Hyperbolicity Lemma}]\label{l01}
Let  $F$ be a hyperbolic $m$-dimensional LFT  with parameter $(i,\sigma,\bp,\bq)$.
Then,  for $x,y\in (p{\Bbb Z}_p)^m$, it holds that
  $$|F^{-1}(x)-F^{-1}(y)|_p\leq p^{-1}|x-y|_p.$$
\end{lem}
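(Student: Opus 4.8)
The plan is to compute $F^{-1}(x)-F^{-1}(y)$ coordinate by coordinate from the explicit inverse formula \eqref{eq:x_k} and then bound each coordinate using the ultrametric inequality together with the order inequalities of Definition~\ref{d-1}. Let $x=(x_1,\dots,x_m)$ and $y=(y_1,\dots,y_m)$ be the two points of $(p{\Bbb Z}_p)^m$ in the statement, put $s=\sigma^{-1}(i)$, and (applying \eqref{eq:x_k} with $x$, resp. $y$, in place of the variables $(y_j)$ there) write $u=F^{-1}(x)=(u_1,\dots,u_m)$ and $v=F^{-1}(y)=(v_1,\dots,v_m)$; thus $u_i=p_s/(x_s+q_s)$ and $u_k=p_s(x_t+q_t)/\bigl(p_t(x_s+q_s)\bigr)$ for $k\neq i$, where $t=\sigma^{-1}(k)$, and similarly $v_i=p_s/(y_s+q_s)$, $v_k=p_s(y_t+q_t)/\bigl(p_t(y_s+q_s)\bigr)$. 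The one elementary fact used throughout is that, since $x_j,y_j\in p{\Bbb Z}_p$: if $ord(q_j)\le 0$ then $ord(x_j+q_j)=ord(q_j)$ (the orders of $x_j$ and $q_j$ being distinct), so $|x_j+q_j|_p=|q_j|_p$; and if $ord(q_j)>0$ then $|x_j+q_j|_p\le p^{-1}$. In particular $|x_s+q_s|_p=|y_s+q_s|_p=|q_s|_p$ by condition (ii).

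For the coordinate $k=i$, clearing denominators gives
$$u_i-v_i=\frac{p_s\,(y_s-x_s)}{(x_s+q_s)(y_s+q_s)},\qquad\text{hence}\qquad |u_i-v_i|_p=|p_s/q_s|_p\cdot|q_s|_p^{-1}\cdot|x_s-y_s|_p.$$
Since $ord(p_s/q_s)>0$ gives $|p_s/q_s|_p\le p^{-1}$ and $ord(q_s)\le 0$ gives $|q_s|_p^{-1}\le 1$ (both from condition (ii)), we get $|u_i-v_i|_p\le p^{-1}|x-y|_p$.

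For a coordinate $k\neq i$, with $t=\sigma^{-1}(k)$ (so $t\neq s$), expanding the difference of the two fractions over a common denominator yields
$$u_k-v_k=\frac{p_s}{p_t}\cdot\frac{(y_s+q_s)(x_t-y_t)+(y_t+q_t)(y_s-x_s)}{(x_s+q_s)(y_s+q_s)},$$
so by the ultrametric inequality $|u_k-v_k|_p$ is at most the maximum of
$$|p_s/p_t|_p\cdot|q_s|_p^{-1}\cdot|x_t-y_t|_p\qquad\text{and}\qquad|p_s/p_t|_p\cdot|y_t+q_t|_p\cdot|q_s|_p^{-2}\cdot|y_s-x_s|_p.$$
The first quantity equals $p^{-(ord(p_s/q_s)-ord(p_t))}|x_t-y_t|_p$, and $ord(p_s/q_s)-ord(p_t)\ge 1$: when $ord(q_t)>0$ this is condition (iv), and when $ord(q_t)\le 0$ it follows from condition (iii) together with $ord(p_t/q_t)\ge ord(p_t)$. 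For the second quantity I would split on $ord(q_t)$: if $ord(q_t)\le 0$ then $|y_t+q_t|_p=|q_t|_p$ and the quantity equals $p^{-(ord(p_s/q_s)-ord(p_t/q_t))+ord(q_s)}|y_s-x_s|_p$, which is $\le p^{-1}|y_s-x_s|_p$ by condition (iii) and $ord(q_s)\le 0$; if $ord(q_t)>0$ then $|y_t+q_t|_p\le p^{-1}$ and the quantity is $\le p^{-1-(ord(p_s/q_s)-ord(p_t))+ord(q_s)}|y_s-x_s|_p\le p^{-1}|y_s-x_s|_p$ by condition (iv). In every case $|u_k-v_k|_p\le p^{-1}|x-y|_p$, and taking the maximum over all $k$ (including $k=i$) finishes the proof.

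There is no genuine obstacle here; the argument is pure bookkeeping with $p$-adic valuations. The only points that need care are recognizing that the denominators $x_s+q_s$, $y_s+q_s$ (and $y_t+q_t$ when $ord(q_t)\le 0$) have valuation exactly $ord(q_s)$ (resp. $ord(q_t)$) because the arguments lie in $p{\Bbb Z}_p$, and organizing the case split on the sign of $ord(q_t)$; conditions (ii)--(iv) of hyperbolicity are tailored precisely so that each of the three estimates above carries a spare factor $p^{-1}$.
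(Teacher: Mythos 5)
Your proof is correct and follows essentially the same route as the paper's: compute $F^{-1}(x)-F^{-1}(y)$ coordinatewise from the inverse formula \eqref{eq:x_k}, then use the ultrametric inequality and conditions (ii)--(iv) of Definition~\ref{d-1}, splitting on the sign of $ord(q_t)$. Your version merely makes explicit the fact $|x_j+q_j|_p=|q_j|_p$ when $ord(q_j)\le 0$ (which the paper uses implicitly) and keeps the numerator in factored form rather than the paper's expanded min-of-orders estimate.
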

\begin{proof}
Denote   $x=(a_1,\ldots,a_m), y=(b_1,\ldots,b_m)$,
  $(c_1,\ldots,c_m)=F^{-1}(x)$ and  $(d_1,\ldots,d_m)=F^{-1}(y)$.

  For $k=i$, we have
\begin{align*}
c_i-d_i=\dfrac{p_s}{a_s+q_s}-\dfrac{p_s}{b_s+q_s}=\dfrac{p_s(b_s-a_s)}{(a_s+q_s)(b_s+q_s)},
\end{align*}
where  $s=\sigma^{-1}(i)$.
By Definition \ref{d-1} (ii) we have
\begin{equation}\label{eq:case-i}
|c_i-d_i|_p<|b_s-a_s|_p.
\end{equation}

For $k\ne i$, we have
\begin{align*}
&c_k-d_k=\dfrac{p_s(a_{t}+q_t)}{p_t(a_s+q_s)}
-\dfrac{p_s(b_{t}+q_t)}{p_t(b_s+q_s)}\\
&=\dfrac{p_s(q_s(a_{t}-b_{t})+q_t(a_{s}-b_{s})+b_{s}(a_{t}-b_{t})-b_{t}(a_{s}-b_{s}))}{p_t(a_s+q_s)(b_s+q_s)},
\end{align*}
where  $s=\sigma^{-1}(i)$ and $t=\sigma^{-1}(k)$.
Therefore, we have
\begin{align*}
&ord(c_k-d_k)\geq ord(p_s)-ord(p_t)-2ord(q_s)\\
&+\min(ord(q_s),ord(q_t),ord(b_s),ord(b_{t}))+\min(ord(a_{t}-b_{t}),ord(a_{s}-b_{s})).
\end{align*}
By Definition \ref{d-1} (iii) (iv) we have
\begin{align*}
ord(p_s)-ord(p_t)-2ord(q_s)+\min(ord(q_s),ord(q_t),ord(b_s),ord(b_{t}))>0,
\end{align*}
which implies
\begin{equation}\label{eq:case-k}
|c_k-d_k|_p<\max\{|b_s-a_s|_p,|b_t-a_t|_p\}.
\end{equation}
The lemma is verified by \eqref{eq:case-i} and \eqref{eq:case-k}.
\end{proof}

\section{Continued fractional system}

In this section, we introduce the notion of continued fractional system. Several
 important families of such systems are constructed.

\begin{defn}\label{d-2}
Let $\Lambda$ be a countable set, and let $\{F_{\lambda} \}_{\lambda \in \Lambda}$
be a family of $m$-dimensional hyperbolic LFTs.
We say  that $\{F_{\lambda} \}_{\lambda \in \Lambda}$ is a $m$-\emph{dimensional  continued fraction system}, if
 $\{F_{\lambda}^{-1}(D'_m)\mid \lambda \in \Lambda\}$ is a partition of $D_m'$.
\end{defn}

\begin{defn}\label{d-2-2}
Let  $\{F_{\lambda} \}_{\lambda \in \Lambda}$ be a  hyperbolic $m$-dimensional continued fraction system.
We define a transformation  $T$ on $D'_m$ associated with  $\{F_{\lambda} \}_{\lambda \in \Lambda}$ by
$$
T(\alpha):=F_{\lambda}(\alpha),\quad \text{if }  \alpha \in F_{\lambda}^{-1}(D'_m).
$$
\end{defn}


For  $\alpha \in D'_m$,  we define
\begin{equation}
\psi(\alpha):=\lambda,  \quad \text{ if }  \alpha \in F_{\lambda}^{-1}(D'_m).
\end{equation}
We say that $(D'_m,T,\psi)$ is the \emph{continued fraction algorithm} associated with
 the system $\{F_{\lambda} \}_{\lambda \in \Lambda}$.
For $j\in {\Bbb Z}_{\geq 0}$, we define  $\psi_j:=\psi(T^j(\alpha))$.
We define the $j$-$th$ $convergent$ $\pi(\alpha;j)$ of $\alpha$ by
$$
\pi(\alpha;j):=(F_{\psi_0}^{-1} \cdots  F_{\psi_j}^{-1})(\bar{0}),
$$
where $\bar{0}=(0,\ldots,0)\in (p{\Bbb Z}_p)^m$.



 \begin{thm}\label{t0}
  For any $\alpha\in D_m'$, we have
$\displaystyle \lim_{j\to \infty}\pi(\alpha;j)=\alpha$.
\end{thm}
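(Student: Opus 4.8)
The plan is to exploit the Hyperbolicity Lemma (Lemma~\ref{l01}) together with the nesting structure of the sets $F_{\psi_0}^{-1}\cdots F_{\psi_j}^{-1}((p\mathbb{Z}_p)^m)$. First I would observe that, since $\{F_\lambda^{-1}(D'_m)\}$ is a partition of $D'_m$ and $\alpha\in F_{\psi_0}^{-1}(D'_m)$ with $T(\alpha)\in D'_m$, one can iterate: writing $\alpha^{(j)}:=T^j(\alpha)$, we have $\alpha^{(j)}\in D'_m$ for all $j$, and $\alpha = (F_{\psi_0}^{-1}\cdots F_{\psi_{j-1}}^{-1})(\alpha^{(j)})$ for every $j\geq 1$. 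Meanwhile $\pi(\alpha;j) = (F_{\psi_0}^{-1}\cdots F_{\psi_j}^{-1})(\bar 0)$, and since $\alpha^{(j+1)}=F_{\psi_j}(\alpha^{(j)})$ lies in $(p\mathbb Z_p)^m$, we may also write $\pi(\alpha;j) = (F_{\psi_0}^{-1}\cdots F_{\psi_{j-1}}^{-1})\big(F_{\psi_j}^{-1}(\bar 0)\big)$. Thus $\alpha$ and $\pi(\alpha;j)$ are images of the two points $\alpha^{(j)}$ and $F_{\psi_j}^{-1}(\bar 0)$ under the \emph{same} composition $G_j:=F_{\psi_0}^{-1}\cdots F_{\psi_{j-1}}^{-1}$ of hyperbolic inverse maps.

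The key estimate is then an immediate induction on the Hyperbolicity Lemma: each $F_\lambda^{-1}$ is a $p^{-1}$-contraction on $(p\mathbb Z_p)^m$ with respect to $|\cdot|_p$, so the composition $G_j$ of $j$ such maps satisfies $|G_j(x)-G_j(y)|_p \leq p^{-j}|x-y|_p$ for all $x,y\in(p\mathbb Z_p)^m$. Applying this with $x=\alpha^{(j)}$ and $y=F_{\psi_j}^{-1}(\bar 0)$ gives
\begin{equation*}
|\alpha - \pi(\alpha;j)|_p = |G_j(\alpha^{(j)}) - G_j(F_{\psi_j}^{-1}(\bar 0))|_p \leq p^{-j}\,|\alpha^{(j)} - F_{\psi_j}^{-1}(\bar 0)|_p \leq p^{-j}\cdot p^{-1},
\end{equation*}
where the last bound uses that both $\alpha^{(j)}$ and $F_{\psi_j}^{-1}(\bar 0)$ lie in $(p\mathbb Z_p)^m$, hence have $|\cdot|_p\leq p^{-1}$, so their difference does too. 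Letting $j\to\infty$ yields $\pi(\alpha;j)\to\alpha$.

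The main point requiring care — and the step I expect to be the real obstacle — is the bookkeeping that justifies the two ways of writing $\pi(\alpha;j)$ and, in particular, that $G_j$ is genuinely a composition of maps each defined and $p^{-1}$-contracting on all of $(p\mathbb Z_p)^m$ (not merely on $D'_m$); this is exactly what Lemma~\ref{l0} and Lemma~\ref{l01} were set up to provide, since there $F_\lambda^{-1}$ was extended to $(p\mathbb Z_p)^m$ via formula~\eqref{eq:x_k}. One must also check that the iteration $\alpha^{(j)}\in D'_m$ never breaks down, which is precisely the partition hypothesis in Definition~\ref{d-2} guaranteeing $\psi_j$ is well-defined at every stage. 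Once these compatibility checks are in place, the convergence is purely the geometric decay $p^{-j}$ coming from hyperbolicity, with no measure theory or delicate analysis needed.
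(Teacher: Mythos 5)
Your proof is correct and is essentially the paper's own argument: both apply the Hyperbolicity Lemma (Lemma~\ref{l01}) inductively to the composition of the inverse maps $F_{\psi_0}^{-1},\dots,F_{\psi_j}^{-1}$, compare $\alpha$ (as the image of an iterate of $T$) with $\pi(\alpha;j)$ (as the image of $\bar 0$), and use that all points lie in $(p\mathbb{Z}_p)^m$ to get the geometric bound $p^{-j}$. The only difference is your harmless re-bracketing that peels off $F_{\psi_j}^{-1}(\bar 0)$ instead of comparing $T^{j+1}\alpha$ with $\bar 0$ directly.
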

\begin{proof}
Let $\alpha\in D_m'$.
Applying  Lemma \ref{l01} inductively we have for $j\in {\Bbb Z}_{\geq 0}$.
\begin{align*}
&|\alpha-\pi(\alpha;j)|_p=|\alpha-(F_{\psi_0}^{-1} \cdots  F_{\psi_j}^{-1})(\bar{0})|_p\\
&\leq \dfrac{1}{p^j}|(F_{\psi_j} \cdots  F_{\psi_0})(\alpha)-\bar{0}|_p
\leq \dfrac{1}{p^j},
\end{align*}
which implies $\displaystyle \lim_{j\to \infty}\pi(\alpha;j)=\alpha$.
\end{proof}

\subsection{One dimensional  systems}
We start with some notations.
\begin{defn}

We define
\begin{align*}
J_N=\left \{\sum_{i=-N}^0 c_i p^i \in \Bbb{Q}_p|c_i\in \{0,1,\ldots,p-1\}, c_{-N}\neq 0 \right \}
\text{ and } J=\bigcup_{N\geq 0} J_N.
\end{align*}
By convention, we set $J_{\infty}=\emptyset$ and $J_N=\{0\}$ if $N<0$. Clearly $J$ is a subset of $\Bbb{Q}$.
\end{defn}

Recall that a one-dimensional LFT must have the form
 $$
 F(x)=\frac{p_1}{x}-q_1
 $$
 where $p_1,q_1\in\Bbb Q$.
We are especially interested in the  following special case:  for $k{\geq 0}$ and $v\in J\setminus \{0\}$, we denote
\begin{equation}
F_{k,v}=\frac{p^k}{x}-v.
\end{equation}
We see easily that
$F_{k,v}$ is   hyperbolic  if and only if
$k>0$ or   $ord(v)<0$.

\begin{defn}
Let  $\ell \in {\Bbb Z}_{\geq 0}\cup \{\infty\}$. We define  $T_\ell:~ p\Bbb{Z}_p\to p\Bbb{Z}_p$ as
$$
T_\ell(x):=\dfrac{p^k}{x}-\left\lfloor \dfrac{p^k}{x} \right\rfloor_p,
$$
where $k=(ord(x)-\ell)\vee 0$.  (If $x=0$, we set $T_\ell(0)=0$.)
\end{defn}

Then, $T_0$ is the transformation associated with Schneider continued fraction algorithm,
and $T_\infty$ is the transformation associated with Ruban continued fraction algorithm
(Indeed,  $T_{\infty}(x)=\dfrac{1}{x}-\left\lfloor \dfrac{1}{x} \right\rfloor_p$).

\begin{thm}\label{l3-1}
Let  $\ell\in {\Bbb Z}_{\geq 0}\cup \{\infty\}$.
Then $T_\ell$ is the transformation on $D_1'$ associated with the continued fraction system
\begin{equation}\label{eq:F_n}
\{F_{k,v}\mid {k{>0}, v\in J_\ell}\}\cup \{F_{0,v}\mid {v\in J_1\cup \cdots\cup J_\ell} \}.
\end{equation}
\end{thm}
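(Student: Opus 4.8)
The plan is to verify the two defining properties of Definition~\ref{d-2}: first, that the listed family in \eqref{eq:F_n} consists of hyperbolic one-dimensional LFTs whose inverse images $\{F_{k,v}^{-1}(D_1')\}$ partition $D_1'$; second, that the piecewise map $T$ built from this system according to Definition~\ref{d-2-2} coincides with $T_\ell$ on $D_1'$. The hyperbolicity of each $F_{k,v}$ appearing in \eqref{eq:F_n} is exactly the criterion already recorded in the excerpt: $F_{k,v}$ is hyperbolic iff $k>0$ or $\mathrm{ord}(v)<0$. For the first block $\{F_{k,v}\mid k>0,\ v\in J_\ell\}$ we have $k>0$, so this is immediate; for the second block $\{F_{0,v}\mid v\in J_1\cup\cdots\cup J_\ell\}$ we have $k=0$ but every such $v$ lies in some $J_N$ with $N\geq 1$, hence $\mathrm{ord}(v)=-N<0$, so hyperbolicity holds. (When $\ell=\infty$ the first block is empty and $J_1\cup\cdots\cup J_\ell=J\setminus\{0\}$; when $\ell=0$ the second block is empty; these degenerate cases should be checked to match $T_0$ and $T_\infty$ respectively.)

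The heart of the argument is the identification of the domains $F_{k,v}^{-1}(D_1')$. Fix $x\in D_1'$, so $x\in p\Bbb Z_p$ and $x\notin\Bbb Q$; write $n=\mathrm{ord}(x)\geq 1$ and $k=(n-\ell)\vee 0$. I would first compute $\mathrm{ord}(p^k/x)=k-n$, which equals $-\ell$ if $n>\ell$ (forcing $k>0$) and is $\geq 0$ if $n\leq\ell$ (forcing $k=0$). Thus $\lfloor p^k/x\rfloor_p$ — the "integral part" — is a genuine element of $J$: in the case $n>\ell$ it lies in $J_\ell$ (its lowest term is $p^{-\ell}$ times a nonzero residue since $p^k/x$ has order exactly $-\ell$), and in the case $0<n\leq\ell$ it lies in $J_1\cup\cdots\cup J_n\subseteq J_1\cup\cdots\cup J_\ell$, with the lower bound $1$ coming from $\mathrm{ord}(p^k/x)=-n\leq -1<0$ so that the integral part is nonzero. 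Setting $v=\lfloor p^k/x\rfloor_p$, one sees $T_\ell(x)=F_{k,v}(x)$ and, since $T_\ell(x)=p^k/x-v$ has $\mathrm{ord}>0$ (the defining subtraction removes the non-positive part of the expansion), $T_\ell(x)\in p\Bbb Z_p$; that it lies in $D_1'$ follows from Remark~\ref{rem:range}. This shows every $x\in D_1'$ belongs to exactly one set $F_{k,v}^{-1}(D_1')$ with $(k,v)$ in the index set of \eqref{eq:F_n}, namely the one with $k=(\mathrm{ord}(x)-\ell)\vee 0$ and $v=\lfloor p^k/x\rfloor_p$, and that on that piece $T$ agrees with $T_\ell$.

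To finish the partition claim I would argue the reverse inclusion and disjointness: given $(k,v)$ in the index set and any $y\in F_{k,v}^{-1}(D_1')$, so that $y=p_1/(y'+v)$ with $y'\in D_1'$ in the notation of Lemma~\ref{l-2} (here $p_1=p^k$, $q_1=v$, and $y'=F_{k,v}(y)$), I need $\mathrm{ord}(y)=n$ to be recoverable so that $(k,v)$ is forced to be the pair selected by the algorithm at $y$. From $y=p^k/(y'+v)$ and $\mathrm{ord}(y')>0$ one gets $\mathrm{ord}(y)=k-\mathrm{ord}(v)$ when $\mathrm{ord}(v)\le 0$ (which always holds here); chasing the two cases $k>0$, $v\in J_\ell$ and $k=0$, $v\in J_1\cup\cdots\cup J_\ell$ shows $k=(\mathrm{ord}(y)-\ell)\vee 0$ in each, and then $v=\lfloor p^k/y\rfloor_p$ because $p^k/y=y'+v$ with $\langle y'\rangle_p=y'$ having positive order, so $v$ is exactly the non-positive part of $p^k/y$. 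This simultaneously gives $F_{k,v}^{-1}(D_1')\subseteq D_1'$ (already covered by Lemma~\ref{l0}), disjointness of distinct pieces, and that the pieces cover $D_1'$. The one point that needs genuine care — and which I expect to be the main obstacle — is bookkeeping the boundary between the two blocks of \eqref{eq:F_n}, i.e. confirming that the value $k=0$ together with $v\in J_1\cup\cdots\cup J_\ell$ captures precisely the orbits with $1\le \mathrm{ord}(x)\le\ell$ and no others, while $k>0$ with $v\in J_\ell$ captures precisely $\mathrm{ord}(x)>\ell$; getting the index ranges $J_1,\dots,J_\ell$ versus $J_\ell$ exactly right, and handling $\ell=0$ and $\ell=\infty$ consistently, is where an off-by-one slip would occur.
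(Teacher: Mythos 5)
Your proposal is correct, and its covering half coincides with the paper's argument: both take $x\in D_1'$, set $k=(ord(x)-\ell)\vee 0$, observe that $ord(p^k/x)$ equals $-\ell$ when $ord(x)>\ell$ and $-ord(x)$ when $ord(x)\le\ell$, and conclude that $v=\lfloor p^k/x\rfloor_p$ lies in $J_\ell$, respectively in $J_1\cup\cdots\cup J_\ell$, so that $T_\ell(x)=F_{k,v}(x)\in D_1'$ and the union in \eqref{D_1} covers $D_1'$. Where you genuinely diverge is the disjointness step. The paper argues by contradiction that if two indices $k\neq k'$ both applied to the same $x$, then the relevant fractional parts would coincide, forcing the $p$-adic expansion of $x$ to be eventually periodic and $x$ to be rational, contradicting $x\in D_1'$ (and it treats $\ell=\infty$ separately). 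You instead run a valuation computation: for any $y\in F_{k,v}^{-1}(D_1')$ with $(k,v)$ in the index set of \eqref{eq:F_n}, $ord(y)=k-ord(v)$, and the two admissible blocks then force $k=(ord(y)-\ell)\vee 0$ and $v=\lfloor p^k/y\rfloor_p$, so the pair $(k,v)$ is determined by $y$. This buys you something: it needs no irrationality of $x$, it identifies each piece $F_{k,v}^{-1}(D_1')$ explicitly as the set of $y\in D_1'$ with $(ord(y)-\ell)\vee 0=k$ and $\lfloor p^k/y\rfloor_p=v$, it yields the agreement of the piecewise map with $T_\ell$ in the same stroke, and it handles $\ell\in{\Bbb Z}_{\geq 0}$ and $\ell=\infty$ uniformly.

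One small correction to your parenthetical on the degenerate case: for $\ell=\infty$ the second block of \eqref{eq:F_n} is indexed by $\bigcup_{k\geq 1}J_k=J\setminus J_0$, not $J\setminus\{0\}$ (note $0\notin J$, while $J_0=\{1,\ldots,p-1\}$). The maps $F_{0,v}$ with $v\in J_0$ have $ord(v)=0$, are not hyperbolic, and must not be admitted into the system; your own argument never produces such a $v$, since $\lfloor 1/x\rfloor_p$ has order $-ord(x)\leq -1$ for $x\in D_1'$, so this slip does not affect the proof.
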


We remark that if $\ell=0$, then the second term in \eqref{eq:F_n} is an empty set; if $\ell=\infty$,
then the first term in \eqref{eq:F_n} is the empty set.

\begin{proof} Fix  $\ell\in {\Bbb Z}_{\geq 0}$.
We see easily that every  LFT appeared in  \eqref{eq:F_n}  is hyperbolic.

Pick $x\in D_1'$. Let $k=(ord(x)-\ell)\vee 0$ and set $v=\left\lfloor \dfrac{p^{k}}{x} \right\rfloor_p$, then
$$
T_\ell(x)=\dfrac{p^{k}}{x}-v=F_{k,v}(x)\in D_1'.
$$

If $k>0$, we see that $ord(x)=\ell+k$,
 which implies  that
$ v\in J_\ell\setminus\{0\}$;
if $k=0$,  then  $1\leq ord(x)\leq \ell$, so
$v\in J_j$ for some $1\leq j\leq \ell$. Therefore, we have
\begin{align}\label{D_1}
 D_1'=\bigcup_{k\ge 1, v\in J_\ell} F_{k,v}^{-1}(D_1')\cup \bigcup_{1\leq k\leq \ell,v\in J_k} F_{0,v}^{-1}(D_1').
\end{align}
To prove the disjointness of the right hand side of (\ref{D_1}), notice that if the fractional parts
of $p^k/x$ and $p^{k'}/x$ coincide and $k\neq k'$, then the $p$-adic expansion of $x$ is eventually periodic
and hence $x$ is a rational number, which contradicts $x\in D'_1$.
This verifies the theorem for $\ell\in \Bbb{Z}_{\geq 0}$.

Let  $l=\infty$.
In this case $T_\infty(x)=\dfrac{1}{x}-\left\lfloor \dfrac{1}{x} \right\rfloor_p=F_{0,v}(x)$ where $v=\left\lfloor \dfrac{1}{x} \right\rfloor_p\in J_{ord(x)}$.
Therefore, we have
\begin{align}\label{D_2}
D_1'=\bigcup_{k\geq 1}\bigcup_{v\in J_k} F_{0,v}^{-1}(D_1').
\end{align}
The disjointness of the right hand side of (\ref{D_2}) is obvious.   This verifies the case $\ell=\infty$ and completes the proof of the theorem.
\end{proof}

Recall that $\mu$ is the Haar measure on $\mathbb Q_p$. As a consequence of  Theorem \ref{t1} and \ref{t3}, we have

\begin{cor}\label{t4}
The measure $\mu$ is an ergodic measure of the transformation $T_\ell$,  $\ell\in {\Bbb Z_{\geq 0}}\cup \{\infty\}$.
\end{cor}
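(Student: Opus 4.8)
The plan is to obtain Corollary~\ref{t4} as a direct consequence of the general ergodicity results of Section~5, applied to the explicit one-dimensional system produced in Theorem~\ref{l3-1}. The first step is to invoke Theorem~\ref{l3-1}: for each $\ell\in{\Bbb Z}_{\geq 0}\cup\{\infty\}$ it realizes $T_\ell$ as the transformation on $D_1'$ associated with the hyperbolic continued fraction system
$$\{F_{k,v}\mid k>0,\, v\in J_\ell\}\cup\{F_{0,v}\mid v\in J_1\cup\cdots\cup J_\ell\}.$$
This is where the real content sits, since Theorem~\ref{l3-1} checks that every $F_{k,v}$ appearing is a hyperbolic LFT in the sense of Definition~\ref{d-1} and that $\{F_{k,v}^{-1}(D_1')\}$ is a partition of $D_1'$, so the system indeed meets the requirements of Definition~\ref{d-2} with $m=1$.

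Having placed $T_\ell$ inside the framework, the second step is to apply Theorem~\ref{t1} and Theorem~\ref{t3} to this system with $m=1$. Theorem~\ref{t1} asserts that the transformation $T$ attached to any hyperbolic continued fraction system is ergodic with respect to $\mu_m$, and Theorem~\ref{t3} supplies the companion statement (invariance of the Haar measure and identification of the ergodic measure) that lets one phrase this as ergodicity of $\mu$ itself. Since $T_\ell$ coincides with the transformation $T$ of the system on $D_1'$, we deduce that $T_\ell$ restricted to $D_1'$ is ergodic for $\mu_1$.

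The last step is to pass from $D_1'$ back to all of $p{\Bbb Z}_p$. By Lemma~\ref{lem:null} one has $\mu_1\big((p{\Bbb Z}_p)\setminus D_1'\big)=0$, and by Lemma~\ref{l0} the set $D_1'$ is backward invariant under each piece $F_{k,v}$, hence $T_\ell^{-1}(D_1')$ agrees with $D_1'$ up to a $\mu$-null set. Consequently any $T_\ell$-invariant Borel subset of $p{\Bbb Z}_p$ agrees modulo $\mu$ with a $T_\ell$-invariant subset of $D_1'$, which by the previous step has $\mu$-measure $0$ or $1$. As $\mu$ restricted to $p{\Bbb Z}_p$ is precisely $\mu_1$, this establishes that $\mu$ is an ergodic measure for $T_\ell$, for every $\ell\in{\Bbb Z}_{\geq 0}\cup\{\infty\}$.

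All the difficulty is upstream, in the proofs of Theorems~\ref{t1} and~\ref{t3} and in the combinatorial verification hidden in Theorem~\ref{l3-1}; granted those, the corollary is essentially bookkeeping, the only point requiring minor care being the handling of the null set $(p{\Bbb Z}_p)\setminus D_1'$ when transferring ergodicity from $D_1'$ to $p{\Bbb Z}_p$.
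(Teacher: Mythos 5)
Your proposal is correct and follows essentially the same route as the paper: realize $T_\ell$ via Theorem~\ref{l3-1} as the transformation on $D_1'$ associated with a one-dimensional continued fraction system, then invoke Theorems~\ref{t1} and~\ref{t3} (together with the null-set observation $\mu\big(p{\Bbb Z}_p\setminus D_1'\big)=0$ from Lemma~\ref{lem:null}) to conclude ergodicity of $\mu$. The only slip is that you have swapped the roles of the two theorems---Theorem~\ref{t1} gives invariance of $\mu_m$ and Theorem~\ref{t3} gives ergodicity, not the other way around---but this does not affect the argument.
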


 Let  $\ell\in {\Bbb Z}_{\geq 0}\cup \{\infty\}$.
We define  two functions $a_{\ell}$ and $b_{\ell}$ on $D_1'$  as follows: for $x\in D_1'$, 
\begin{align*}
(b_{\ell}(x), a_{\ell}(x)):=\begin{cases}
(k,v),& \ \text{ if }x\in F_{k,v}^{-1}(D_1') \text{ with }k\ge 1, v\in J_\ell,\\
(0,v),& \ \text{ if }x\in F_{0,v}^{-1}(D_1') \text{ with } v\in \bigcup_{k=1}^\ell J_k.
\end{cases}
\end{align*}
The functions $a_{\ell}$ and $b_{\ell}$ are  well defined by the formula (\ref{D_1}).
  Then,  by the continued fraction system (\ref{eq:F_n}), a point 
$\xi\in D_1'$ has the following $p$-adic continued fractional expansion:
\begin{align*}
\xi=\cfrac{p^{b_1}}{a_1+\cfrac{p^{b_2}}{a_2+\cfrac{p^{b_3}}{a_3+\ldots}}}.
\end{align*}
 
As a straightforward application of Corollary \ref{t4} we have
\begin{thm}\label{t4a}
For almost everywhere $\xi\in D_1'$
\begin{align*}
&\lim_{n\to \infty}\dfrac{a_1+\ldots +a_n}{n}=\dfrac{p}{2},
\end{align*}
and
\begin{align*}
&\lim_{n\to \infty}\dfrac{b_1+\ldots +b_n}{n}=\dfrac{p}{p^{\ell}(p-1)}.
\end{align*}
\end{thm}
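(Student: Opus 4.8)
The plan is to obtain both limits from the Birkhoff ergodic theorem. By \corref{t4} the Haar measure $\mu$ is a $T_\ell$-invariant ergodic measure, and it is already a probability measure on $D_1'$, since $\mu(p{\Bbb Z}_p)=1$ and $\mu((p{\Bbb Z}_p)\setminus D_1')=0$ (\lemref{lem:null}). Unwinding the construction of the continued fractional expansion one has $b_i=b_\ell(T_\ell^{i-1}\xi)$ and $a_i=a_\ell(T_\ell^{i-1}\xi)$, so that
$$\frac{a_1+\dots+a_n}{n}=\frac1n\sum_{i=0}^{n-1}a_\ell(T_\ell^{i}\xi),\qquad
\frac{b_1+\dots+b_n}{n}=\frac1n\sum_{i=0}^{n-1}b_\ell(T_\ell^{i}\xi).$$
Both $a_\ell,b_\ell$ are constant on the partition pieces $F_{k,v}^{-1}(D_1')$, hence measurable; once I check that $a_\ell,b_\ell\in L^1(\mu)$, the ergodic theorem gives that these averages converge $\mu$-a.e.\ on $D_1'$ to $\int_{D_1'}a_\ell\,d\mu$ and $\int_{D_1'}b_\ell\,d\mu$. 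So everything reduces to evaluating these two integrals, for which I need the measures of the cylinders.

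The only genuine computation is $\mu\bigl(F_{k,v}^{-1}(D_1')\bigr)$. For $v\in J_N$ we have $ord(v)=-N$, so $ord(y+v)=-N$ for every $y\in p{\Bbb Z}_p$ and $y\mapsto y+v$ carries $p{\Bbb Z}_p$ onto the ball $v+p{\Bbb Z}_p$; dividing $p^k$ by this shows $F_{k,v}^{-1}(p{\Bbb Z}_p)=(p^k/v)(1+p^{N+1}{\Bbb Z}_p)=p^k/v+p^{k+2N+1}{\Bbb Z}_p$, a ball whose centre has order $k+N\ge1$ in every case occurring in \eqref{eq:F_n}. Hence $\mu\bigl(F_{k,v}^{-1}(p{\Bbb Z}_p)\bigr)=p^{-(k+2N)}$, and since $F_{k,v}^{-1}$ is Lipschitz (\lemref{l01}) it sends the null set $p{\Bbb Z}_p\setminus D_1'$ to a null set, so
$$\mu\bigl(F_{k,v}^{-1}(D_1')\bigr)=p^{-(k+2N)},\qquad v\in J_N.$$
As a check, $|J_N|=(p-1)p^N$ and summing over the system \eqref{eq:F_n} gives total mass $1=\mu(D_1')$.

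For $\int_{D_1'}a_\ell\,d\mu=\sum_{k,v}v\,\mu\bigl(F_{k,v}^{-1}(D_1')\bigr)$ I would exploit that the weight $p^{-(k+2N)}$ depends on $v$ only through $N$, so I only need $\sum_{v\in J_N}v$: writing $v=\sum_{i=-N}^{0}c_ip^i$ and averaging digits uniformly (the leading digit $c_{-N}$ over $\{1,\dots,p-1\}$, mean $p/2$; the rest over $\{0,\dots,p-1\}$, mean $(p-1)/2$) one finds, after a short simplification, $\sum_{v\in J_N}v=|J_N|\cdot\frac p2$ for every $N$, whence $\int_{D_1'}a_\ell\,d\mu=\frac p2\sum_{k,v}\mu\bigl(F_{k,v}^{-1}(D_1')\bigr)=\frac p2$. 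For $\int_{D_1'}b_\ell\,d\mu$ I would instead group by the value of $b_\ell$: for finite $\ell$ and $k\ge1$, $\{b_\ell=k\}=\bigcup_{v\in J_\ell}F_{k,v}^{-1}(D_1')$ has measure $(p-1)p^{\ell}\cdot p^{-(k+2\ell)}=(p-1)p^{-(k+\ell)}$, while $b_\ell=0$ off these sets, so $\int_{D_1'}b_\ell\,d\mu=(p-1)p^{-\ell}\sum_{k\ge1}kp^{-k}=(p-1)p^{-\ell}\cdot\frac{p}{(p-1)^2}=\frac{p}{p^{\ell}(p-1)}$. The case $\ell=0$ is identical with $J_0=\{1,\dots,p-1\}$, and for $\ell=\infty$ one has $b_\ell\equiv0$, consistent with $p^{-\infty}=0$.

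I do not anticipate a real obstacle; the work is essentially this bookkeeping. The two points needing some care are the $L^1$ membership — $a_\ell$ is bounded since $\sum_{i=-N}^{0}c_ip^i<\sum_{i\le0}(p-1)p^i=p$, and $\int b_\ell\,d\mu<\infty$ follows from $\mu(\{b_\ell=k\})=O(p^{-k})$ above — and the fact that the explicit map $T_\ell$ on $p{\Bbb Z}_p$ agrees $\mu$-a.e.\ with the abstract transformation $T$ attached to the system \eqref{eq:F_n}, which is exactly \thmref{l3-1} together with $\mu((p{\Bbb Z}_p)\setminus D_1')=0$, so that \corref{t4} indeed applies to $T_\ell$.
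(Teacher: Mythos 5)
Your proposal is correct and takes essentially the same route as the paper: Birkhoff's ergodic theorem (justified by \corref{t4}) applied to $a_\ell$ and $b_\ell$, reducing the statement to evaluating $\int_{D_1'}a_\ell\,d\mu$ and $\int_{D_1'}b_\ell\,d\mu$ over the cylinders of the system \eqref{eq:F_n}. The only cosmetic difference is that you recompute $\mu(F_{k,v}^{-1}(D_1'))=p^{-(k+2N)}$ directly, whereas the paper quotes \lemref{l4}, i.e.\ $\mu(F_{k,v}^{-1}(D_1'))=\iota(F_{k,v})^{-1}$; the resulting summations (digit averaging giving mean $p/2$, and the geometric series for $b_\ell$) are the same.
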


\begin{rem}
In the case of $\ell=0$ in Theorem \ref{t4a}
the former formula  was 
shown by Han\v{c}l, Ja\v{s}\v{s}ov\'a, Lertchoosakul and Nair \cite{HJLN}
and the later was
shown by Hirsh and Washington \cite{HW}.
\end{rem}

\begin{proof} 
Since $a_{\ell}\in L_1(D_1',\mu)$, by Birkhoff's ergodic Theorem, for almost everywhere $\xi\in D_1'$ it holds that
\begin{align*}
&\lim_{n\to \infty}\dfrac{a_1+\ldots +a_n}{n}=
\lim_{n\to \infty}\dfrac{a_{\ell}(\xi)+\ldots +a_{\ell}(T_{\ell}^{n-1}\xi)}{n}\\
&=\int_{D_1'} a_{\ell}(\xi)d\mu.
\end{align*}
By (\ref{D_1}) and Lemma \ref{l4}  we have
\begin{align*}
&\int_{D_1'} a_{\ell}(\xi)d\mu=\sum_{1\leq k\leq \ell,v\in J_k}v\mu(F_{0,v}^{-1}(D_1'))+
\sum_{k\ge 1, v\in J_\ell}v\mu(F_{k,v}^{-1}(D_1'))
\\
&=\sum_{1\leq k\leq \ell,v\in J_k}v\iota(F_{0,v})^{-1}+
\sum_{k\ge 1, v\in J_\ell}v\iota(F_{k,v})^{-1}
\\
&=\sum_{1\leq k\leq \ell,v\in J_k}\dfrac{v}{p^{2k}}+
\sum_{k\ge 1, v\in J_\ell}\dfrac{v}{p^{k+2\ell}}
\\
&=\dfrac{p}{2}.
\end{align*}
Similarly, for almost everywhere $\xi\in D_1'$ we have
\begin{align*}
&\lim_{n\to \infty}\dfrac{b_1+\ldots +b_n}{n}=\int_{D_1'} b_{\ell}(\xi)d\mu\\
&=\sum_{k\ge 1, v\in J_\ell}k\iota(F_{k,v})^{-1}\\
&=\sum_{k\ge 1, v\in J_\ell}\dfrac{k}{p^{k+2\ell}}\\
&=\dfrac{p}{p^{\ell}(p-1)}.
\end{align*}
\end{proof}

\subsection{Multi-dimensional   systems}\label{Multi}
Let $m\in {\Bbb Z_{> 0}}$ and $\ell\in {\Bbb Z_{\geq 0}}\cup \{\infty\}$.
We define a transformation $T_{\ell,m}$ on $D_m'$ as follows.

If $m=1$, we set $T_{\ell,m}:=T_\ell$.

Let $m>1$.
For $x=(x_1,\ldots,x_m)\in D_m'$, we define $T_{\ell,m}(x):=(y_1,\ldots,y_m)$ as
\begin{align*}
y_k:=\begin{cases}
T_\ell (x_1), &\text{ if  $k=m$,}\\
\dfrac{p^{r_k}x_{k+1}}{x_{1}}-\left\lfloor \dfrac{p^{r_k}x_{k+1}}{x_{1}} \right\rfloor_p,  &\text{ if $k\ne m$},
\end{cases}
\end{align*}
where $r_k=(-\ell-ord(x_{k+1})+ord(x_1))\vee 0$.

\begin{rem}
If $\ell=\infty$ and $m=2$, we have
\begin{align*}
T_{\infty,2}(x,y)=\left(\dfrac{y}{x}-\left\lfloor \dfrac{y}{x} \right\rfloor_p,\dfrac{1}{x}-\left\lfloor \dfrac{1}{x} \right\rfloor_p\right).
\end{align*}
This transformation is first considered by Ruban \cite{R2,R3}, as a $p$-adic version of Jacobi-Perron algorithm.
We remark that in general,  $T_{\infty,m}$ is a $p$-adic version of higher dimensional classical Jacobi-Perron algorithm.
\end{rem}

\begin{defn}\label{letl-1-0}
Let $\ell\in {\Bbb Z_{\geq 0}}$ and pick $x=(x_1,\ldots,x_m)\in D_m'$.
We define
$\bp^{(\ell,x)} \in \Bbb{Q}^m$   by
\begin{align*}\displaystyle
p^{(\ell,x)}_{k}:=\begin{cases}
p^{(-\ell+ord(x_1))\vee 0}, &\text{ if $k=m$},\\
p^{(-\ell-ord(x_{k+1})+ord(x_1))\vee 0}, &\text{ if $k\ne m$}.
\end{cases}
\end{align*}
For $\ell=\infty$ we define $\bp^{\infty,x}:=(1,\dots,1)$.

For $\ell\in {\Bbb Z_{\geq 0}}\cup \{\infty\}$,
  we define $\bq^{(\ell,x)}\in \Bbb{Q}^m$   by
\begin{align*}
q^{(\ell,x)}_{k}:=\begin{cases}
\left\lfloor \dfrac{p^{(\ell,x)}_{k}}{x_{1}} \right\rfloor_p, &\text{ if $k=m$},\\
\left\lfloor \dfrac{p^{(\ell,x)}_{k}x_{k+1}}{x_{1}} \right\rfloor_p, &\text{ if $k\ne m$}.
\end{cases}
\end{align*}
\end{defn}

Let $\sigma'$ be the permutation on $\{1,\dots, m\}$ defined by
$\sigma'(k)=k+1 \pmod{m}.$

\begin{defn}\label{letl-1-1}
Let $\ell\in {\Bbb Z_{\geq 0}}\cup \{\infty\}$.
For $x\in D_m'$
we define $F^{(\ell,x)}$ to be
 the $m$-dimensional LFT  with parameter $(1,\sigma', \bp^{(\ell,x)}, \bq^{(\ell,x)})$.
\end{defn}

\begin{rem}
We remark that
for $x\in D_m'$, $F^{(\ell,x)}(x)=T_{\ell,m}(x)$.
\end{rem}

\begin{lem}\label{l7}
For any  $\ell\in {\Bbb Z}_{\geq 0}\cup \{\infty\}$ and $x\in D_m'$,
the transformation $F^{(\ell,x)}$ is   hyperbolic.
\end{lem}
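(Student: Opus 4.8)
The plan is to verify that $F^{(\ell,x)}$ satisfies the hypotheses of the sufficient condition in Lemma \ref{l-2-2}, rather than checking the four conditions of Definition \ref{d-1} directly. Write $F = F^{(\ell,x)}$, which by Definition \ref{letl-1-1} has parameter $(1,\sigma',\bp^{(\ell,x)},\bq^{(\ell,x)})$; note that $\sigma'(k)=k+1 \pmod m$, so $s := (\sigma')^{-1}(1) = m$. Thus $q_s = q^{(\ell,x)}_m = \lfloor p^{(\ell,x)}_m / x_1 \rfloor_p$. First I would check the easy structural requirements: each $p^{(\ell,x)}_k$ is a nonnegative power of $p$ (for $\ell$ finite this is immediate from the $(\,\cdot\,)\vee 0$ in Definition \ref{letl-1-0}, and for $\ell=\infty$ we have $\bp^{(\infty,x)}=(1,\dots,1)$), so $p_k \ne 0$ and $ord(p_k) \ge 0$.

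Next I would pin down $ord(q_s)$ and the condition ``$ord(q_k) > 0 \implies ord(p_k) = 0$''. For $k=m$: since $x_1 \in p\mathbb Z_p$ we have $ord(x_1) \ge 1$, and $p^{(\ell,x)}_m = p^{(-\ell+ord(x_1))\vee 0}$; a short computation with the definition of $\lfloor\cdot\rfloor_p$ gives $ord(q_s) = ord\big(p^{(\ell,x)}_m/x_1\big) = ((-\ell+ord(x_1))\vee 0) - ord(x_1) \le 0$, using $ord(x_1)\ge 1 \ge $ (that expression) $+\, \dots$; the point is that the integral part $\lfloor z \rfloor_p$ of $z$ has the same order as $z$ when $ord(z)\le 0$, and is $0$ (so order $+\infty$, vacuously fine) when $ord(z) > 0$. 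So $ord(q_s)\le 0$ holds, possibly with $q_s = 0$, but in the latter case condition (ii) of Definition \ref{d-1} still has to be argued — however Lemma \ref{l-2-2} only assumes $ord(q_s)\le 0$, so I will not worry about it here. For $k \ne m$: $r_k = (-\ell - ord(x_{k+1}) + ord(x_1))\vee 0$, and $p^{(\ell,x)}_k = p^{r_k}$, so $ord(p_k) = r_k$; one checks that $ord\big(p^{(\ell,x)}_k x_{k+1}/x_1\big) = r_k + ord(x_{k+1}) - ord(x_1) = (-\ell)\vee(ord(x_1)-ord(x_{k+1})-\ell) \le 0$ whenever $r_k = 0$, so if $ord(q_k) > 0$ then necessarily $r_k > 0$ would force $ord$ of the argument to possibly be positive — actually the cleaner statement is: if $ord(q_k)>0$ then $\lfloor\cdot\rfloor_p$ of the argument is $0$, which happens exactly when the argument has positive order, i.e. when $r_k + ord(x_{k+1}) - ord(x_1) > 0$; but by the very choice $r_k = (-\ell-ord(x_{k+1})+ord(x_1))\vee 0$, if $r_k > 0$ then $r_k + ord(x_{k+1})-ord(x_1) = -\ell \le 0$, a contradiction, so $ord(q_k)>0$ forces $r_k = 0$, i.e. $ord(p_k)=0$, as required.

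Finally, the hypothesis of Lemma \ref{l-2-2} asks for the existence of some $\bar x \in D_m'$ with $F(\bar x)\in D_m'$; I would simply take $\bar x = x$ itself and invoke the remark immediately preceding this lemma, namely $F^{(\ell,x)}(x) = T_{\ell,m}(x)$, together with the (to-be-used, or easily checked) fact that $T_{\ell,m}$ maps $D_m'$ into $D_m'$ — concretely, $T_{\ell,m}(x)$ has coordinates in $p\mathbb Z_p$ by the order computations above, and $1$ together with those coordinates stays $\mathbb Z$-linearly independent because, via Lemma \ref{l-2}, linear independence of $1,x_1,\dots,x_m$ is equivalent to that of $1$ and the coordinates of $F(x)$. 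With all hypotheses of Lemma \ref{l-2-2} verified, that lemma yields that $F^{(\ell,x)}$ is hyperbolic. The main obstacle is purely bookkeeping: getting the four $\max$/$\vee$-with-zero expressions for the orders of $p_k$, $q_k$ lined up correctly in the two regimes ($\ell$ finite versus $\ell=\infty$, and $k=m$ versus $k\ne m$), and in particular handling the degenerate cases $q_k = 0$ (order $-\infty$ by the paper's convention, or order $+\infty$ as an integral part) so that the implications in Lemma \ref{l-2-2} and Definition \ref{d-1} remain correct; I expect the $\ell=\infty$ case to be the cleanest since then $\bp = (1,\dots,1)$ and only the orders of the $q_k$ and the nonrationality of the $x_i$ matter.
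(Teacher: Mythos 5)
Your proposal is correct and takes essentially the same route as the paper: check the hypotheses of Lemma \ref{l-2-2} for the parameters $(1,\sigma',\bp^{(\ell,x)},\bq^{(\ell,x)})$ (the paper dismisses this verification as clear, while you spell it out) and then take $\bar x=x$, using $F^{(\ell,x)}(x)=T_{\ell,m}(x)\in D_m'$. Your aside that possibly $q_s=0$ is superfluous: your own computation shows $ord\bigl(p^{(\ell,x)}_m/x_1\bigr)\le 0$, so the integral part is nonzero and $ord(q_s)\le 0$ holds outright.
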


\begin{proof} Clearly $\bp^{(\ell,x)}$ and $\bq^{(\ell,x)}$ satisfy conditions in Lemma \ref{l-2-2}. Moreover, $F^{(\ell,x)}(x)=T_{\ell,m}(x)\in D_m'$, hence $F^{(\ell,x)}$ is   hyperbolic
by Lemma \ref{l-2-2}.
\end{proof}

Let $\ell\in {\Bbb Z_{\geq 0}}\cup \{\infty\}$.
For $x,y\in D_m'$
if $F^{(\ell,x)}=F^{(\ell,y)}$, we define $x\sim y$.
Clearly, this is an equivalence relation. We shall use the notation $F^{(\ell,[x])}$ where
$[x]$ is the equivalence class containing $x$.

\begin{thm}\label{l8}
Let  $\ell \in {\Bbb Z}_{\geq 0}\cup \{\infty\}$.
Then the family
$\{F^{(\ell,x)}\}_{x\in D_m'/\sim}$ is an
$m$-dimensional  continued fraction system.
\end{thm}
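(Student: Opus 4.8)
The plan is to verify the two defining properties of an $m$-dimensional continued fraction system from Definition \ref{d-2}: first, that each $F^{(\ell,x)}$ is a hyperbolic LFT, and second, that $\{(F^{(\ell,x)})^{-1}(D_m')\mid [x]\in D_m'/\sim\}$ is a partition of $D_m'$. The first point is already done in Lemma \ref{l7}, so the whole content is the partition claim, which splits into \emph{covering} and \emph{disjointness}.

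For the covering, I would take an arbitrary $x\in D_m'$ and observe directly from the Remark following Definition \ref{letl-1-1} that $F^{(\ell,x)}(x)=T_{\ell,m}(x)$. The key sub-step is that $T_{\ell,m}(x)\in D_m'$: the last coordinate is $T_\ell(x_1)\in D_1'$ (using that $x_1\notin\Bbb Q$, as in the proof of Theorem \ref{l3-1}), and the other coordinates are fractional parts of $p^{r_k}x_{k+1}/x_1$, which together with $1$ remain linearly independent over $\Bbb Z$ because $1,x_1,\dots,x_m$ are (any $\Bbb Z$-linear relation among $1,y_1,\dots,y_m$ would, after clearing the $\lfloor\cdot\rfloor_p$ terms and multiplying by $x_1$, give one among $1,x_1,\dots,x_m$). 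Hence $x\in (F^{(\ell,x)})^{-1}(D_m')$, so the preimages cover $D_m'$.

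For disjointness, suppose $z\in (F^{(\ell,x)})^{-1}(D_m')\cap (F^{(\ell,y)})^{-1}(D_m')$; I must show $F^{(\ell,x)}=F^{(\ell,y)}$, i.e. $[x]=[y]$. Since $F^{(\ell,x)}$ and $F^{(\ell,y)}$ share the permutation $\sigma'$ and the index $1$, it suffices to show the parameters $\bp,\bq$ agree, and since $z\in D_m'$ both $F^{(\ell,x)}(z)$ and $F^{(\ell,y)}(z)$ lie in $(p\Bbb Z_p)^m$. Writing out $F^{(\ell,x)}(z)$ coordinatewise, the value $p_k^{(\ell,x)}$ is determined by the orders $ord(x_1),ord(x_{k+1})$, and $q_k^{(\ell,x)}$ is the integral part $\lfloor p_k^{(\ell,x)} z_{k+1}/z_1\rfloor_p$ — but one must be careful: $\bp^{(\ell,x)},\bq^{(\ell,x)}$ are defined via the orders and integral parts of the \emph{original} $x$, not of $z$. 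The clean way is to use the already-established fact that $F^{(\ell,x)}(z)=T_{\ell,m}(\tilde z)$ can fail; instead I would argue that since $z\in (F^{(\ell,x)})^{-1}(D_m')$, applying $F^{(\ell,x)}$ and using hyperbolicity plus Lemma \ref{l0}, the point $z$ has a well-defined ``first digit data'': the requirement that all coordinates of $F^{(\ell,x)}(z)$ lie in $p\Bbb Z_p$ (i.e. have positive order) forces, via the explicit formulas $y_k=p_k z_{\sigma'(k)}/z_1-q_k$ resp. $y_s=p_s/z_1-q_s$, that $q_k=\lfloor p_k z_{k+1}/z_1\rfloor_p$ and that $p_k=p^{r_k}$ with $r_k$ the unique nonnegative exponent making $ord(p_k z_{k+1}/z_1)$ land in the correct range dictated by $\ell$ — and these are exactly the same constraints for $x$ and for $y$, since they are constraints on $z$ alone. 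Thus $\bp^{(\ell,x)}=\bp^{(\ell,y)}$ and $\bq^{(\ell,x)}=\bq^{(\ell,y)}$, giving $[x]=[y]$. The analogue of the periodicity argument in Theorem \ref{l3-1} (distinct powers of $p$ cannot give the same fractional part for an irrational) handles the uniqueness of $r_k$ in the boundary case $\ell<\infty$, $r_k=0$.

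The main obstacle I anticipate is the bookkeeping in the disjointness step: disentangling the fact that $\bp^{(\ell,x)},\bq^{(\ell,x)}$ are defined from $x$ while the common point is $z$, and showing rigorously that membership $z\in (F^{(\ell,x)})^{-1}(D_m')$ pins down all the order data and integral parts intrinsically from $z$. Once that reduction is made, the remaining verification is the same digit-uniqueness argument used for the one-dimensional system in Theorem \ref{l3-1}, applied coordinate by coordinate, together with the observation that $r_k\ge 0$ is forced (so there is no ambiguity between $r_k=0$ and $r_k>0$ cases). I would also double-check the $\ell=\infty$ case separately, where $\bp\equiv(1,\dots,1)$ and only the $\bq$-data needs to be matched, mirroring the $\ell=\infty$ part of the proof of Theorem \ref{l3-1}.
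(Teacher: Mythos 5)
Your proof is correct, and on the crucial disjointness step it is in fact more careful than the paper's own argument. The paper disposes of hyperbolicity by Lemma \ref{l7} (as you do) and then proves disjointness by supposing $[x]\neq[y]$ but $F^{(\ell,x)}(z)=F^{(\ell,y)}(z)$ for some $z\in D_m'$: equating first coordinates, $p^x_1\neq p^y_1$ would make $z_2/z_1$ rational (eventually periodic expansion), so $\bp^x=\bp^y$ and hence $\bq^x=\bq^y$. Strictly, the negation of disjointness only yields a point $z\in(F^{(\ell,x)})^{-1}(D_m')\cap(F^{(\ell,y)})^{-1}(D_m')$, i.e.\ both images lie in $D_m'$, not that they coincide; you argue from this weaker, honest hypothesis. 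Your mechanism is the right one: membership forces $q_k=\lfloor p_k z_{k+1}/z_1\rfloor_p$ (two elements of $J$ in the same coset of $p{\Bbb Z}_p$ are equal), and the exponent in $p_k=p^{r_k}$ is pinned by $z$ because, from Definition \ref{letl-1-0}, either $r_k>0$ and then $ord(q_k)=-\ell$, forcing $ord(z_{k+1}/z_1)=-\ell-r_k$, or $r_k=0$ and then $ord(z_{k+1}/z_1)\geq-\ell$; these alternatives are mutually exclusive for a fixed $z$, so $\bp^x=\bp^y$ and then $\bq^x=\bq^y$, i.e.\ $[x]=[y]$. This is exactly your ``constraints on $z$ alone'' reduction, and what it buys is a disjointness proof that does not need the images to agree, at the cost of the order bookkeeping you anticipate. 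Two minor remarks: your covering step (via $F^{(\ell,x)}(x)=T_{\ell,m}(x)\in D_m'$ and preservation of ${\Bbb Z}$-linear independence) is what the paper delegates to Lemma \ref{l7} and Remark \ref{rem:range}; and the periodicity argument you import from Theorem \ref{l3-1} is not quite the tool that closes the boundary case in your setting, since the two images (hence the fractional parts) need not coincide --- but your order-range argument already covers that case, so nothing is missing.
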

\begin{proof} For simplicity, in this proof, we denote $F^x:=F^{(\ell,x)}$, $\bp^x=:\bp^{(\ell,x)}$ and
$\bq^x=:\bq^{(\ell,x)}$.
By Lemma \ref{l7}, we only need to show that
$$
D_m'=\bigcup_{[x]\in D_m'/\sim }(F^{(\ell,[x])})^{-1}(D_m')
$$
is a partition. Suppose on the contrary that $[x]\neq [y]$ but
$$
F^x(z)=F^y(z):=(w_1,\dots, w_k)
$$
for some $z\in D_m'$. This implies that (see Definition \ref{d-1-1})
$$
w_1=\frac{p^x_1z_2}{z_1}-q^x_1=\frac{p^y_1z_2}{z_1}-q^y_1.
$$
If $p^x_1\neq p^y_1$, then  $z_2/z_1$ is eventually periodic in $p$-adic expansion, so $z_2/z_1\in {\mathbb Q}$, a contradiction. So we have $p^x_1=p^y_1$ and consequently $q^x_1=q^y_1$. For the other indices other than $1$, the same argument can be applied. Therefore, $\bp^x=\bp^y$ and $\bq^x=\bq^y$, so $x\sim y$. This contradiction proves the lemma.
\end{proof}

By Theorem \ref{t3} we have

\begin{cor}\label{t5}
The measure $\mu_m$ is an ergodic measure of the transform $T_{\ell,m}$.
\end{cor}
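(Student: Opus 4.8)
The plan is to deduce Corollary~\ref{t5} directly from the general ergodicity theorem, Theorem~\ref{t3}, once $T_{\ell,m}$ has been identified with the transformation attached to a concrete hyperbolic continued fraction system. The first step is to invoke Theorem~\ref{l8}, which tells us that the family $\{F^{(\ell,x)}\}_{x\in D_m'/\sim}$ is an $m$-dimensional continued fraction system in the sense of Definition~\ref{d-2}: each $F^{(\ell,x)}$ is a hyperbolic LFT by Lemma~\ref{l7}, and $\{(F^{(\ell,[x])})^{-1}(D_m')\}$ is a partition of $D_m'$. The second step is to check that the transformation $T$ associated with this system via Definition~\ref{d-2-2} coincides with $T_{\ell,m}$ on $D_m'$. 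Here the Remark that, for $x\in D_m'$, $F^{(\ell,x)}(x)=T_{\ell,m}(x)$ is precisely what is needed: given $z\in D_m'$ one has $F^{(\ell,z)}(z)=T_{\ell,m}(z)\in D_m'$, so $z$ lies in the fiber $(F^{(\ell,z)})^{-1}(D_m')$; the fibers being disjoint, $\psi(z)=[z]$, and hence $T(z)=F^{(\ell,[z])}(z)=F^{(\ell,z)}(z)=T_{\ell,m}(z)$.

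The third step is to apply Theorem~\ref{t3} to the system $\{F^{(\ell,x)}\}$: it yields that $\mu_m$ is an ergodic measure for the associated transformation, i.e.\ for $T_{\ell,m}$ on $D_m'$. Finally, to phrase the conclusion for the Haar measure on the whole space $(p{\Bbb Z}_p)^m$, I would appeal to Lemma~\ref{lem:null} together with the reduction recorded right after it: since $\mu_m((p{\Bbb Z}_p)^m\setminus D_m')=0$ and $D_m'$ is $T_{\ell,m}$-invariant (each branch maps its fiber onto $D_m'$, and each $F_\lambda^{-1}$ maps $(p{\Bbb Z}_p)^m$ into itself by Lemma~\ref{l0}), ergodicity on $D_m'$ is the same as ergodicity of $\mu_m$ for $T_{\ell,m}$. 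The case $m=1$ needs no separate treatment, since $T_{\ell,1}=T_\ell$ and the argument specializes to recover Corollary~\ref{t4}.

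The deduction itself carries no essential difficulty: all the weight sits in Theorem~\ref{t3} — the ergodicity of the transformation of an arbitrary hyperbolic continued fraction system, which I expect to be driven by the uniform contraction of the inverse branches (the Hyperbolicity Lemma~\ref{l01}) through a R\'enyi-type bounded-distortion / density-point argument — and in the partition statement of Theorem~\ref{l8}. The only two points that genuinely call for a line of justification in the present deduction are the identification $T=T_{\ell,m}$ on $D_m'$ and the passage from ergodicity on $D_m'$ to ergodicity on $(p{\Bbb Z}_p)^m$ across the $\mu_m$-null set $(p{\Bbb Z}_p)^m\setminus D_m'$, and both are immediate from the lemmas already in hand.
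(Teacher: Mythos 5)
Your deduction is correct and follows essentially the same route as the paper, which obtains Corollary~\ref{t5} directly from Theorem~\ref{t3} applied to the system $\{F^{(\ell,x)}\}_{x\in D_m'/\sim}$ established in Theorem~\ref{l8}. The extra details you supply — identifying the associated transformation with $T_{\ell,m}$ via the remark $F^{(\ell,x)}(x)=T_{\ell,m}(x)$ and disposing of the null set $(p{\Bbb Z}_p)^m\setminus D_m'$ through Lemma~\ref{lem:null} and Lemma~\ref{l0} — are exactly the steps the paper leaves implicit, and they are justified as you state them.
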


Inoue and Nakada \cite{IN} considered a modification of Brun's algorithm (\cite{Sc}) over a set of formal power series.
We give a $p$-adic version of Brun's algorithm as follows.

For $x=(x_1,\ldots,x_m)\in D_m'$, we define $T_{B}(x):=(y_1,\ldots,y_m)$ as
\begin{align*}
y_k:=\begin{cases}
\dfrac{1}{x_i}-\left\lfloor\dfrac{1}{x_i} \right\rfloor_p, &\text{ if  $k=i$,}\\
\dfrac{x_{k}}{x_{i}}-\left\lfloor\dfrac{x_{k}}{x_i} \right\rfloor_p,  &\text{ if $k\ne i$},
\end{cases}
\end{align*}
where $i=\min\{j\mid |x_j|_p=|x|_p\}$.
We also have the ergodicity for this algorithm  in the similar manner.

\section{Lemmas}
In this section, we prove several lemmas.
In what follows, we always use $a$ and $b$ to denote elements in $\Bbb{Z}_p$ and assume that $n\in {\Bbb Z_{>0}}$.
For sets $X,Y\subset {\Bbb Q}_p$ with $0\notin Y$ we define
$\dfrac{1}{Y}:=\left \{\dfrac{1}{y}|~y\in Y\right \}$
and $\dfrac{X}{Y}:=\left \{\dfrac{x}{y}|~x\in X,y\in Y\right \}$.

\begin{lem}\label{l1}
Let $k\in {\Bbb Z_{\geq 0}}$, $n\in {\Bbb Z_{>0}}$ and $a\in {\Bbb Z}_p$.  Then,
$$\dfrac{1}{pa+p^n{\Bbb Z}_p+\dfrac{1}{p^k}}=
p^k\left(\dfrac{1}{1+p^kpa}+p^{n+k}{\Bbb Z}_p\right)$$.
\end{lem}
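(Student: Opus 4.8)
The plan is to verify the claimed set identity by a direct pair of inclusions, working throughout inside $\Bbb Q_p$ and tracking $p$-adic valuations. First I would rewrite the left-hand set: an element there has the form $1/(pa + p^n z + p^{-k})$ for some $z \in \Bbb Z_p$, and it is convenient to factor out $p^{-k}$ from the denominator, writing
\begin{align*}
pa + p^n z + p^{-k} = p^{-k}\bigl(1 + p^k(pa + p^n z)\bigr) = p^{-k}\bigl(1 + p^{k+1}a + p^{n+k}z\bigr).
\end{align*}
Since $k \ge 0$ and $n \ge 1$, the quantity $1 + p^{k+1}a + p^{n+k}z$ is a unit in $\Bbb Z_p$ (its residue mod $p$ is $1$), so the reciprocal is well-defined and again a unit. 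Taking reciprocals then gives
\begin{align*}
\frac{1}{pa + p^n z + p^{-k}} = p^k \cdot \frac{1}{1 + p^{k+1}a + p^{n+k}z}.
\end{align*}

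Next I would show that as $z$ ranges over $\Bbb Z_p$, the element $1/(1 + p^{k+1}a + p^{n+k}z)$ ranges exactly over $1/(1+p^{k+1}a) + p^{n+k}\Bbb Z_p$. For the forward inclusion, write $u = 1 + p^{k+1}a$ (a unit) and note
\begin{align*}
\frac{1}{u + p^{n+k}z} - \frac{1}{u} = \frac{-p^{n+k}z}{u(u + p^{n+k}z)} \in p^{n+k}\Bbb Z_p,
\end{align*}
because both $u$ and $u + p^{n+k}z$ are units, so their product is a unit and the difference has valuation at least $n+k$. For the reverse inclusion, given any target $1/u + p^{n+k}w$ with $w \in \Bbb Z_p$, I need to produce a $z \in \Bbb Z_p$ with $1/(u + p^{n+k}z) = 1/u + p^{n+k}w$; solving formally yields $z = -u^2 w/(1 + p^{n+k} u w)$, and since the denominator $1 + p^{n+k}uw$ is a unit, this $z$ indeed lies in $\Bbb Z_p$. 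Multiplying the resulting set equality through by $p^k$ gives precisely the right-hand side $p^k\bigl(1/(1+p^k pa) + p^{n+k}\Bbb Z_p\bigr)$, noting $p^{k+1}a = p^k\cdot pa$.

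I do not anticipate a serious obstacle here; the only points requiring a little care are (i) confirming that every denominator that appears is genuinely a unit (so that reciprocals make sense and valuations of products add cleanly), which hinges on the hypotheses $k \ge 0$ and $n \ge 1$, and (ii) the surjectivity half of the middle step, where one must check that the formally-solved $z$ actually lies in $\Bbb Z_p$ rather than merely in $\Bbb Q_p$ — again this follows because the relevant denominator is a unit. Both are routine $p$-adic estimates. An alternative, perhaps cleaner, packaging is to observe that $v \mapsto 1/v$ is a bijection from the coset $u + p^{n+k}\Bbb Z_p$ (all of whose elements are units) onto the coset $u^{-1} + p^{n+k}\Bbb Z_p$, since inversion is an isometry on the unit group $\Bbb Z_p^\times$ with respect to the metric induced by $|\cdot|_p$; this immediately yields the middle step without explicit algebra. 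I would likely present the isometry argument as the main line and relegate the explicit formulas to a parenthetical remark.
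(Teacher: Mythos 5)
Your proof is correct. Both you and the paper verify the identity by two inclusions, but the mechanics differ: the paper works directly with $1/(pa+u+p^{-k})$ for $u\in p^n\Bbb Z_p$ and establishes each inclusion by expanding the relevant reciprocal as an explicit geometric series $\sum_{j\ge 0}(-p^k(pa+u))^j$ (and, for the reverse inclusion, exhibiting a preimage via a similar series), whereas you first factor the denominator as $p^{-k}\bigl(1+p^{k+1}a+p^{n+k}z\bigr)$ with the second factor a unit, and then reduce everything to the statement that inversion maps the coset $u+p^{n+k}\Bbb Z_p$ of units onto $u^{-1}+p^{n+k}\Bbb Z_p$ --- proved either by the difference computation $\frac{1}{u+p^{n+k}z}-\frac{1}{u}=\frac{-p^{n+k}z}{u(u+p^{n+k}z)}$ together with the explicit solution $z=-u^2w/(1+p^{n+k}uw)$ for surjectivity, or in one stroke from the fact that inversion is an isometry of $\Bbb Z_p^\times$. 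Your route avoids series manipulations entirely and makes the structural reason for the lemma (units are stable under inversion, and inversion preserves distances on units) more transparent; the paper's series computation is more hands-on but accomplishes the same valuation bookkeeping. All the points needing care --- that $1+p^{k+1}a+p^{n+k}z$ and $1+p^{n+k}uw$ are units because $k\ge 0$ and $n\ge 1$, and that the solved-for $z$ lies in $\Bbb Z_p$ --- are correctly handled, so there is no gap.
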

\begin{proof}
Let $u\in p^n{\Bbb Z}_p$.
We have
\begin{align*}
&\dfrac{1}{pa+u+\dfrac{1}{p^k}}=\dfrac{p^k}{p^k(pa+u)+1}=
p^k\sum_{j=0}^{\infty}(-p^k(pa+u))^j
\\
&\in p^k\left(\dfrac{1}{1+p^kpa}+p^{n+k}{\Bbb Z}_p\right).
\end{align*}
Conversely, let $u\in p^n{\Bbb Z}_p$.
We have
\begin{align*}
&p^k\left(\dfrac{1}{1+p^kpa}+p^{k}u\right)=\dfrac{p^k}{\dfrac{1+p^kpa}{1+p^{k}u(1+p^kpa)}}\\
&=\dfrac{p^k}{(1+p^kpa)\sum_{j=0}^{\infty}(-p^ku(1+p^kpa))^j}\in \dfrac{1}{pa+p^n{\Bbb Z}_p+\dfrac{1}{p^k}}.
\end{align*}

\end{proof}

\begin{lem}\label{l2}
Let $k\in {\Bbb Z_{\geq 0}}$, $n\in {\Bbb Z_{>0}}$ and $a\in {\Bbb Z}_p$.
Let $v\in {\Bbb Q}$ such that $ord(v)=-k$. Then,
$$\dfrac{1}{pa+p^n{\Bbb Z}_p+v}=\dfrac{1}{v+pa}+p^{n+2k}{\Bbb Z}_p.$$
\end{lem}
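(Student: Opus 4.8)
The plan is to mimic the proof of Lemma \ref{l1}: expand the reciprocals as geometric series and keep track of $p$-adic valuations. The preliminary observation is that, since $ord(v)=-k\le 0<1\le ord(pa)$ and every $u\in p^n{\Bbb Z}_p$ has $ord(u)\ge n\ge 1$, the ultrametric inequality forces $ord(v+pa)=-k$ and $ord(pa+u+v)=-k$ for all such $u$. In particular $0\notin pa+p^n{\Bbb Z}_p+v$, so both sides of the asserted identity are genuine subsets of ${\Bbb Q}_p$, and this common value $-k$ of the valuation is precisely what will produce the exponent $n+2k$ on the right.

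For the inclusion ``$\subseteq$'', I would fix $u\in p^n{\Bbb Z}_p$, write $w=v+pa+u=(v+pa)\bigl(1+\tfrac{u}{v+pa}\bigr)$, note $ord\bigl(\tfrac{u}{v+pa}\bigr)\ge n+k\ge 1$, and expand
$$\frac1w=\frac1{v+pa}\sum_{j\ge 0}\Bigl(\frac{-u}{v+pa}\Bigr)^{j}=\frac1{v+pa}+\frac1{v+pa}\sum_{j\ge 1}\Bigl(\frac{-u}{v+pa}\Bigr)^{j}.$$
Since $ord\bigl(\tfrac1{v+pa}\bigr)=k$ while each term with $j\ge1$ contributes order $\ge n+k$, the tail lies in $p^{k+(n+k)}{\Bbb Z}_p=p^{n+2k}{\Bbb Z}_p$, which is exactly the claim.

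For ``$\supseteq$'', I would fix $z\in p^{n+2k}{\Bbb Z}_p$, observe that $ord\bigl(z(v+pa)\bigr)\ge n+k\ge1$, so $1+z(v+pa)$ is a unit, and set $w:=\dfrac{v+pa}{1+z(v+pa)}$, which is constructed so that $\tfrac1w=\tfrac1{v+pa}+z$. Expanding $w=(v+pa)\sum_{j\ge 0}\bigl(-z(v+pa)\bigr)^{j}$ and using $ord(v+pa)=-k$ shows that $w-(v+pa)$ has order $\ge -k+(n+k)=n$, i.e. $w\in pa+p^n{\Bbb Z}_p+v$, so $\tfrac1{v+pa}+z=\tfrac1w$ lies in the left-hand set.

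The argument involves no real difficulty beyond careful valuation bookkeeping; the only point worth flagging is that $ord(v+pa)=-k$ holds uniformly — in particular in the edge case $k=0$ — because $ord(v)=-k\le 0$ and $ord(pa)\ge 1$ can never coincide, and it is this identity that makes the exponents $k$ and $n+2k$ come out cleanly.
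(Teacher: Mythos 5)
Your proof is correct, but it takes a different route from the paper's. The paper disposes of Lemma \ref{l2} in one line by reducing it to Lemma \ref{l1}: since $ord(v)=-k$, the number $vp^k$ is a $p$-adic unit, so writing $v=\frac{vp^k}{p^k}$ and dividing numerator and denominator by $vp^k$ turns the set $\frac{1}{pa+p^n{\Bbb Z}_p+v}$ into $\frac{1}{vp^k}\cdot\frac{1}{\frac{pa}{vp^k}+p^n{\Bbb Z}_p+\frac{1}{p^k}}$, to which Lemma \ref{l1} applies (with $a$ replaced by $a/(vp^k)$), and the final rescaling by $\frac1v$, of order $k$, converts $p^{n+k}{\Bbb Z}_p$ into the stated $p^{n+2k}{\Bbb Z}_p$. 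You instead prove the set equality directly, establishing both inclusions by geometric-series expansions: the forward inclusion via $\frac1{v+pa+u}=\frac1{v+pa}\sum_{j\ge0}\bigl(\frac{-u}{v+pa}\bigr)^j$, and the reverse one via the explicit preimage $w=\frac{v+pa}{1+z(v+pa)}$. This is essentially a self-contained redoing, for general $v$ of order $-k$, of the computation the paper carries out only in the normalized case $v=p^{-k}$ (Lemma \ref{l1}); indeed your argument makes Lemma \ref{l1} a special case rather than an ingredient. What the paper's reduction buys is brevity and reuse of work already done; what your direct argument buys is independence from Lemma \ref{l1} and an explicit statement of the key uniform valuation fact $ord(v+pa)=ord(v+pa+u)=-k$ (valid also when $k=0$, since $ord(v)\le 0<ord(pa+u)$), which in the paper is hidden inside the unit-scaling step. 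Your valuation bookkeeping ($k+(n+k)=n+2k$ for the tail, and $-k+(n+k)=n$ for $w-(v+pa)$) is accurate throughout.
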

\begin{proof}
Since $vp^k\in {\Bbb Z}_p\setminus p{\Bbb Z}_p$,  by Lemma \ref{l1}, we have
\begin{align*}
&\dfrac{1}{pa+p^n{\Bbb Z}_p+v}=\dfrac{1}{pa+p^n{\Bbb Z}_p+\dfrac{vp^k}{p^k}}=\dfrac{\dfrac{1}{vp^k}}{\dfrac{pa}{vp^k}+p^n{\Bbb Z}_p+\dfrac{1}{p^k}}\\
&=\dfrac{1}{v}\left(\dfrac{1}{1+p^k\dfrac{pa}{vp^k}}+p^{n+k}{\Bbb Z}_p\right)
=\dfrac{1}{v+pa}+p^{n+2k}{\Bbb Z}_p.
\end{align*}
\end{proof}

\begin{lem}\label{lem:null}
$\mu_m((p{\Bbb Z}_p)^m\backslash D_m')=0$.
\end{lem}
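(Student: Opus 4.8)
The plan is to show that the complement of $D_m'$ inside $(p\mathbb{Z}_p)^m$ is contained in a countable union of null sets, where each null set corresponds to a fixed linear dependence relation. Recall that $(x_1,\dots,x_m) \in D_m'$ precisely when each $x_j \in p\mathbb{Z}_p$ and $1, x_1, \dots, x_m$ are linearly \emph{independent} over $\mathbb{Z}$. So a point of $(p\mathbb{Z}_p)^m$ fails to lie in $D_m'$ exactly when there exist integers $c_0, c_1, \dots, c_m$, not all zero, with $c_0 + c_1 x_1 + \cdots + c_m x_m = 0$. For each fixed nonzero vector $\mathbf{c} = (c_0, c_1, \dots, c_m) \in \mathbb{Z}^{m+1}$, let
$$
H_{\mathbf{c}} := \{(x_1,\dots,x_m) \in (p\mathbb{Z}_p)^m \mid c_0 + c_1 x_1 + \cdots + c_m x_m = 0\}.
$$
Then $(p\mathbb{Z}_p)^m \setminus D_m' = \bigcup_{\mathbf{c} \in \mathbb{Z}^{m+1}\setminus\{0\}} H_{\mathbf{c}}$, a countable union. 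Since a countable union of $\mu_m$-null sets is $\mu_m$-null, it suffices to prove $\mu_m(H_{\mathbf{c}}) = 0$ for every nonzero $\mathbf{c}$.

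Next I would fix such a $\mathbf{c}$ and pick an index $j \in \{1,\dots,m\}$ with $c_j \neq 0$ (if only $c_0 \neq 0$, then $H_{\mathbf{c}}$ is empty since $c_0 \neq 0$ in $\mathbb{Q}_p$, and we are done). Then $H_{\mathbf{c}}$ is contained in the set where $x_j = -c_j^{-1}(c_0 + \sum_{k \neq j} c_k x_k)$; that is, once the coordinates $x_k$ for $k \neq j$ are chosen, $x_j$ is uniquely determined. Writing $\mu_m$ as the product of the $m$ copies of $\mu$ and applying Fubini,
$$
\mu_m(H_{\mathbf{c}}) \leq \int_{(p\mathbb{Z}_p)^{m-1}} \mu\bigl(\{x_j \in p\mathbb{Z}_p \mid x_j = f(x_k : k\neq j)\}\bigr)\, d\mu^{m-1},
$$
where $f$ denotes that affine function. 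The inner measure is the $\mu$-measure of a single point, which is $0$ (every singleton in $\mathbb{Q}_p$ has Haar measure zero, since $\{t\} = \bigcap_n (t + p^n\mathbb{Z}_p)$ and $\mu(t + p^n\mathbb{Z}_p) = p^{-(n-1)} \to 0$). Hence the integrand vanishes identically and $\mu_m(H_{\mathbf{c}}) = 0$.

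There is no real obstacle here; the only point needing a word of care is the measurability of $H_{\mathbf{c}}$ (it is closed, being the preimage of $\{0\}$ under a continuous map $\mathbb{Q}_p^m \to \mathbb{Q}_p$, hence Borel) so that Fubini applies, and the observation that a singleton is $\mu$-null, which follows immediately from the normalization $\mu(pa + p^n\mathbb{Z}_p) = p^{-(n-1)}$ stated in the excerpt. Summing over the countably many $\mathbf{c}$ completes the proof.
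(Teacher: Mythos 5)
Your proof is correct and follows essentially the same route as the paper: write the complement as a countable union of integer-linear hyperplane sets and show each is null via measurability (closedness) plus Fubini and the vanishing of singleton Haar measure. If anything, your version is slightly tidier, since you keep the constant term $c_0$ in the dependence relation (the paper's displayed union lists only the homogeneous relations $\alpha_1x_1+\cdots+\alpha_mx_m=0$) and your argument also covers $m=1$ uniformly instead of treating it as a separate countability case.
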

\begin{proof} Let us denote $\Omega_m':=(p{\Bbb Z}_p)^m\backslash D_m'$.
Since $\Omega_1'=p{\Bbb Z}_p\cap {\Bbb Q}$, we infer that every element in $\Omega_1'$
has a periodic $p$-adic expansion, so $\mu_1(\Omega_1')=0$.

Now we consider the case that $m\geq 2$.
Let $(\alpha_1,\dots, \alpha_m)\in \Bbb{Z}^m\setminus \{\bar 0\}$;
 let $j$ be the index such that  $\alpha_j\ne 0$.
We put
$$A_{\alpha_1,\ldots,\alpha_m}=\{(x_1,\ldots,x_m)\in (p\Bbb {Z}_p)^m|~\alpha_1x_1+\cdots+\alpha_mx_m=0\}.$$
First, we show $\mu_m(A_{\alpha_1,\ldots,\alpha_m})=0$.
Without loss of generality, we may assume that the common divisor of $\alpha_1,\dots, \alpha_m$ is $1$, and that $\alpha_m$ is not a multiple of $p$.
Then
$$x_m=(\alpha_1 x_1+\cdots \alpha_{m-1} x_{m-1})/\alpha_m$$
is a function on $(p\Bbb{Z})^{m-1}$. Clearly it is a continuous function, so its graph
$A_{\alpha_1,\dots, \alpha_m}$
is a compact subset of $(p\Bbb{Z})^{m}$ since $p\Bbb{Z}$ is compact. This proves that
$A_{\alpha_1,\dots, \alpha_m}$ is a closed set and hence it is measurable.
So by Fubini Theorem, the measure of $A_{\alpha_1,\dots, \alpha_m}$ is $0$.

Finally, since
$
\Omega_m'=\bigcup_{(j_1,\ldots, j_{m})\in {\Bbb Z}^m\backslash\{(0,\ldots,0)\}} A_{j_1,\ldots,j_m}\cap (p{\Bbb Z}_p)^m,
$
we  obtain the lemma.
\end{proof}

\section{Ergodicity}

In this section, we will always assume that  $F$ is a hyperbolic $m$-dimensional LFT  with parameter $(i,\sigma, \bp,\bq)$. Moreover, we will denote $s=\sigma^{-1}(i)$, and
\begin{equation}\label{eq:uv}
u=-ord(q_s),  \quad v=ord(p_s),
\end{equation}
\begin{equation}
h=\max\{ord(p_s)-ord(p_k)\mid k=1,\dots, m \}.
\end{equation}
  For $a=(a_1,\dots, a_m)\in {\Bbb Q}_p^m$, let us denote
  $$a+(p^n{\Bbb Z}_p)^m=(a_1+p^n{\Bbb Z}_p)\times\cdots \times (a_m+p^n{\Bbb Z}_p).$$

 Let $n\geq 1$ be an integer, let $a=(a_1,\ldots,a_m)\in (p{\Bbb Z}_p)^m$, let $y$ be an integer  such that $0\leq y < p^h$. We define cylinders of $p{\mathbb Z}_p$
 ($k=1,\dots, m$) by
$$V^{(y,n,a)}_k=\begin{cases}\dfrac{p_s}{a_s+q_s}+p^{n+v+2u}(y+p^{h}{\Bbb Z}_p),&\text{ if $k=i$},\\
\dfrac{1}{p_t}\left(\dfrac{p_s}{a_s+q_s}+p^{n+v+2u}y\right)\left(a_{t}+q_t+p^{n}{\Bbb Z}_p\right),
&\text{ if $k\ne i$},\end{cases}$$
where $t=\sigma^{-1}(k)$.
In case that $a$ and $n$ are fixed, we will simply denote $V^{(y,n,a)}$ by $V^{(y)}$.
We will use $\sqcup$ to denote a disjoint union.

\begin{lem}\label{l3}
Let $n\geq 1$ and let $a=(a_1,\ldots,a_m)\in (p{\Bbb Z}_p)^m$.
Then
\begin{align}\label{eq1}
F^{-1}(a+(p^n{\Bbb Z}_p)^m)
=\bigsqcup_{0\leq y <p^h}V_1^{(y)}\times \ldots \times V_m^{(y)}.
\end{align}
\end{lem}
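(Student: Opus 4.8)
The plan is to compute $F^{-1}(a+(p^n{\Bbb Z}_p)^m)$ coordinate by coordinate using the explicit formula \eqref{eq:x_k} from Lemma \ref{l-2}, and then to reconcile the resulting set with the proposed disjoint union of boxes $V_1^{(y)}\times\cdots\times V_m^{(y)}$. Write $(y_1,\dots,y_m)$ for a point of $a+(p^n{\Bbb Z}_p)^m$, so $y_r\in a_r+p^n{\Bbb Z}_p$ for each $r$, and $(x_1,\dots,x_m)=F^{-1}(y_1,\dots,y_m)$. For $k=i$ we have $x_i=p_s/(y_s+q_s)$ with $y_s\in a_s+p^n{\Bbb Z}_p$; here $ord(q_s)=-u\le 0$, so I would apply Lemma \ref{l2} (with the role of its ``$v$'' played by $q_s$, and noting $y_s+q_s=q_s+(a_s+p^n{\Bbb Z}_p)$ where $a_s$ contributes a $p{\Bbb Z}_p$-term) to get $1/(y_s+q_s)\in 1/(a_s+q_s)+p^{n+2u}{\Bbb Z}_p$, hence after multiplying by $p_s$ (which has $ord(p_s)=v$),
$$
x_i\in \frac{p_s}{a_s+q_s}+p^{n+v+2u}{\Bbb Z}_p.
$$
Now $p^{n+v+2u}{\Bbb Z}_p=p^{n+v+2u}\bigl(\bigsqcup_{0\le y<p^h}(y+p^h{\Bbb Z}_p)\bigr)$, which exhibits $x_i$ as lying in exactly one of the sets $V_i^{(y)}$, and this $y$ is determined by the residue of $x_i$ at the appropriate digit.

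For $k\ne i$, with $t=\sigma^{-1}(k)$, formula \eqref{eq:x_k} gives $x_k=\dfrac{p_s(y_t+q_t)}{p_t(y_s+q_s)}=\dfrac{1}{p_t}\Bigl(\dfrac{p_s}{y_s+q_s}\Bigr)(y_t+q_t)$. The first parenthesized factor is exactly $x_i$ (times nothing), which we already showed lies in $\frac{p_s}{a_s+q_s}+p^{n+v+2u}y+p^{n+v+2u+h}{\Bbb Z}_p$ once we have fixed the value $y$ from the $k=i$ analysis; and $y_t+q_t\in a_t+q_t+p^n{\Bbb Z}_p$. Multiplying these two and dividing by $p_t$, and matching with the definition of $V_k^{(y,n,a)}$, I need to check that the product set $\bigl(\frac{p_s}{a_s+q_s}+p^{n+v+2u}y+p^{n+v+2u+h}{\Bbb Z}_p\bigr)\cdot\bigl(a_t+q_t+p^n{\Bbb Z}_p\bigr)$, divided by $p_t$, is contained in $V_k^{(y)}=\frac{1}{p_t}\bigl(\frac{p_s}{a_s+q_s}+p^{n+v+2u}y\bigr)(a_t+q_t+p^n{\Bbb Z}_p)$; this is a routine containment of the form $(\alpha+p^N{\Bbb Z}_p)(\beta+p^n{\Bbb Z}_p)\subset \alpha(\beta+p^n{\Bbb Z}_p)+p^{n+?}{\Bbb Z}_p$, using that $ord(\alpha)=v+2u$ (by (ii), $v+2u=ord(p_s)-2ord(q_s)$ and $ord(p_s/q_s)>0$ gives $ord(\alpha)\ge$ something $\ge 1$ in any case positive), that $ord(\beta+p^n{\Bbb Z}_p)\ge 1$, and that $N=n+v+2u+h\ge n+1$; one also divides by $p_t$, whose order is between $v-h$ and $v$. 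So $F^{-1}(a+(p^n{\Bbb Z}_p)^m)\subseteq\bigsqcup_{0\le y<p^h}V_1^{(y)}\times\cdots\times V_m^{(y)}$, with the correct $y$ in each fiber, and the union is disjoint because distinct $y$ give disjoint $V_i^{(y)}$.

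For the reverse inclusion, I would run the same computation backwards: the converse direction of Lemma \ref{l2} (and Lemma \ref{l1}) shows that $1/(a_s+q_s)+p^{n+2u}{\Bbb Z}_p\subseteq 1/(q_s+p{\Bbb Z}_p+p^n{\Bbb Z}_p\text{-shift})$, i.e. every point of $V_i^{(y)}$ arises as $p_s/(y_s+q_s)$ for some $y_s\in a_s+p^n{\Bbb Z}_p$; and once $y_s$ (hence $x_i$) is so realized, every point of $V_k^{(y)}$ for $k\ne i$ is realized by a suitable $y_t\in a_t+p^n{\Bbb Z}_p$, since $V_k^{(y)}$ is literally $x_i\cdot(a_t+q_t+p^n{\Bbb Z}_p)/p_t$ with $x_i$ of the special reciprocal form. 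Because the $y_t$ for different $t$ are chosen independently, and $F$ is a bijection on $D_m$ (Lemma \ref{l-2}), the whole box $V_1^{(y)}\times\cdots\times V_m^{(y)}$ is the $F^{-1}$-image of $a+(p^n{\Bbb Z}_p)^m$ restricted to the stratum with that value of $y$. Combining the two inclusions yields \eqref{eq1}.

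The main obstacle I anticipate is bookkeeping the $p$-adic orders so that the various ``error'' terms land in the right power of $p$: one must track $ord(p_s)=v$, $ord(q_s)=-u$, and $ord(p_t)$ (which ranges over $[v-h,v]$), and verify that the factor $\frac{p_s}{a_s+q_s}$ has order exactly $v+2u$ before any $p^{n}$-perturbation — this is where hyperbolicity conditions (ii)–(iv) of Definition \ref{d-1} get used, to guarantee $ord(p_s/q_s)>0$ and the analogous positivity for the $k\ne i$ coordinates, so that all the $V_k^{(y)}$ actually sit inside $p{\Bbb Z}_p$ and the "$+p^{N}{\Bbb Z}_p$" absorptions are valid. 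Everything else is a direct application of Lemmas \ref{l1} and \ref{l2}.
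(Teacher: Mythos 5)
Your overall route is the same as the paper's: invert $F$ coordinatewise via Lemma \ref{l-2}, control the $i$-th coordinate with Lemma \ref{l2}, and absorb the remaining discrepancies by order estimates coming from hyperbolicity. Your first inclusion (image $\subseteq$ union of boxes) is essentially right in outline, though the bookkeeping is off in two places: $ord\bigl(\tfrac{p_s}{a_s+q_s}\bigr)=v+u$, not $v+2u$ (since $ord(a_s+q_s)=ord(q_s)=-u$), and proving containment in $\alpha(\beta+p^n{\Bbb Z}_p)+p^{n+?}{\Bbb Z}_p$ is weaker than what you need unless $?\geq ord(\alpha)$; with the correct order, conditions (iii)--(iv) of Definition \ref{d-1} and the definition of $h$ do give $u+h+ord(q_t)>0$, which is exactly what makes the error term land inside $\alpha\,p^n{\Bbb Z}_p$, so this half can be completed. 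The disjointness argument is fine.

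The genuine gap is in your reverse inclusion (boxes $\subseteq$ image). You justify it by asserting that $V_k^{(y)}$ ``is literally $x_i\cdot(a_t+q_t+p^n{\Bbb Z}_p)/p_t$,'' but it is not: by definition $V_k^{(y)}$ is built from the \emph{fixed center} $\alpha=\tfrac{p_s}{a_s+q_s}+p^{n+v+2u}y$, whereas a general point of the box has $i$-th coordinate $x_i=\alpha+p^{n+v+2u+h}\beta_i$ with $\beta_i\in{\Bbb Z}_p$ arbitrary and chosen \emph{independently} of the other coordinates. The whole point of the lemma is that $F^{-1}$ couples the coordinates through the common factor $1/(y_s+q_s)$, yet the image of the polydisc is still a disjoint union of genuine product sets; so the crux of this direction is to show that the deviation $p^{n+v+2u+h}\beta_i$ of $x_i$ from the center can be compensated by re-choosing $y_t$ inside $a_t+p^n{\Bbb Z}_p$, i.e.\ that $x_i\,(a_t+q_t+p^n{\Bbb Z}_p)=\alpha\,(a_t+q_t+p^n{\Bbb Z}_p)$ as sets for every $x_i\in V_i^{(y)}$. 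This is precisely what the paper does by solving $F^{-1}(a+p^nu)=G(\beta)$ for $u_t$ (equation \eqref{equ_k}) and checking that the coefficient of $\beta_i$ there has strictly positive order ($O_1>0$), using conditions (iii), (iv) of Definition \ref{d-1} and the exponent $h$ built into $V_i^{(y)}$. Your sketch silently replaces $\alpha$ by $x_i$ and thereby skips this step; the estimate needed is of the same kind as the absorption you invoked in the other direction, but it must be stated and verified here (and it is the step that actually consumes the hyperbolicity hypotheses), so as written the reverse inclusion rests on a false identification rather than a proof.
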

\begin{proof}
First, we  show that the right hand side of   (\ref{eq1}) is a disjoint union.
Suppose that $V_i^{y_1}\cap V_i^{y_2}\ne \emptyset$ for $y_1,y_2\in \{0,1,\dots, p^h-1\}$.
Then, there exist $\gamma_1,\gamma_2\in {\Bbb Z}_p$ such that
$$\frac{p_s}{a_s+q_s}+p^{n+v+2u}(y_1+p^{h}\gamma_1)=\frac{p_s}{a_s+q_s}+p^{n+v+2u}(y_2+p^{h}\gamma_2),$$
which implies $y_1-y_2\in p^h{\Bbb Z}_p$. Therefore, $y_1=y_2$.

Next, we  show that the right side of   (\ref{eq1}) is included  in the left side of the equation.
Let $y$ be an integer with $0\leq y<p^h$.  We define a function
$$G:~({\Bbb Q}_p)^m \to ({\Bbb Q}_p)^m$$
 by
$(\tau_1,\dots,\tau_m)=G(\beta_1,\dots, \beta_m)$ where
\begin{align}
\tau_k=\begin{cases}\dfrac{p_s}{a_s+q_s}+p^{n+v+2u}(y+p^{h}\beta_k) &\text{if $k=i$},\\
\dfrac{1}{p_t}\left(\dfrac{p_s}{a_s+q_s}+p^{n+v+2u}{y}\right)\left(a_{t}+q_t+p^{n}\beta_k\right)
&\text{if $k\ne i$},\end{cases}\label{tauda}
\end{align}
 where $t=\sigma^{-1}(k)$. Then $  V_1^{(y)}\times \ldots \times V_m^{(y)}=G(({\Bbb Z}_p)^m)$.

 Pick  $(\beta_1,\ldots,\beta_m)\in {\Bbb Z}_p^m$ and denote
 $(\tau_1,\dots,\tau_m)=G(\beta_1,\dots, \beta_m)$.

By Lemma \ref{l2} we have
\begin{align}\label{eqp_sa_s}
\dfrac{p_s}{a_s+p^n{\Bbb Z}_p+q_s}=\dfrac{p_s}{a_s+q_s}+p_sp^{n+2u}{\Bbb Z}_p.
\end{align}
Therefore, there exists $\delta,\delta'\in {\Bbb Z}_p$ such that
\begin{align}
&\dfrac{p_s}{a_s+q_s}+p^{n+v+2u}y=\dfrac{p_s}{a_s+p^n\delta+q_s},\label{eqp_s}\\
&\dfrac{p_s}{a_s+q_s}+p^{n+v+2u}(y+p^{h}\beta_i)=\dfrac{p_s}{a_s+p^n\delta'+q_s}\label{eqp_s2}.
\end{align}

Let $u=(u_1,\dots, u_m)\in ({\Bbb Q}_p)^m$ such that $u_i=\delta'$.
Then $F^{-1}(a+p^nu)$ is well defined for such $u$.
 Solving the equation
\begin{equation}\label{eq:FG}
F^{-1}(a+p^n u)=G(\beta),
\end{equation}
we obtain that for  $t\ne s$, it holds that
\begin{align}\label{eqp_s3}
\dfrac{1}{a_s+p^n\delta+q_s}\left(a_{t}+q_t+p^{n}\beta_k\right)=
\dfrac{1}{a_s+p^n\delta'+q_s}\left(a_{t}+q_t+p^{n}u_{t}\right).
\end{align}
(Notice that for the $i$-th coordinate, equation \eqref{eq:FG} holds by \eqref{eqp_s2}.)

 We claim that $u=(u_1,\dots, u_m)\in ({\mathbb Z}_p)^m$.
By (\ref{eqp_s}), (\ref{eqp_s2}) and (\ref{eqp_s3}), for $t\neq s$, an easy calculation shows that
\begin{equation}\label{equ_k}
 u_{t}=-\dfrac{(a_{t}+q_t)p^{v+2u+h}(a_s+p^n\delta'+q_s)}{p_s}\beta_i+
 \dfrac{a_s+p^n\delta'+q_s}{a_s+p^n\delta+q_s}\beta_k.
\end{equation}
Let us denote the orders of the first term and the second term by $O_1$ and $O_2$, respectively. Then
\begin{align}
O_1
&\geq \min\{ord(q_t),ord(a_{t})\}+v+2u+h+ord(q_s)-ord(p_s)\nonumber\\
&=\min(ord(q_t),ord(a_{t}))-ord(q_s)+h. \label{pqt}
\end{align}
If $ord(q_t)> 0$, then
$$O_1> -ord(q_s)+ord(p_s)-ord(p_t)>0$$
 by Condition (iv) in Definition \ref{d-1};
if $ord(q_t)\leq 0$, then
$$O_1\geq ord(q_t)-ord(q_s)+ord(p_s)-ord(p_t)>0,$$
by Condition (iii) in Definition \ref{d-1}.
Therefore, we always have $O_1>0$.
We see easily
$
O_2=ord  ( \beta_k )\geq 0.
$
Therefore, we have $u_{t}\in {\Bbb Z}_p$. Our claim is proved and the desired inclusion relation is confirmed.

Finally, we will show that the left side of   (\ref{eq1}) is included  in the right side of the equation.
Let $u=(u_1,\ldots,u_m)\in {\Bbb Z}_p^m$.
By (\ref{eqp_sa_s}) there exists $\omega\in {\Bbb Z}_p$ such that
$$\dfrac{p_s}{a_s+p^nu_s+q_s}=\dfrac{p_s}{a_s+q_s}+p^{n+v+2u}\omega.$$
Let $y$ be the integer with $0\leq y<p^h$ such that $\omega-y\in p^h{\Bbb Z}_p$. We set
$\delta'=u_i$   and let $\delta\in {\mathbb Z}_p$ be the number defined by \eqref{eqp_s}.
Solving the equation $F^{-1}(a+p^nu)=G(\beta)$ where $\beta=(\beta_1,\dots, \beta_m)\in ({\Bbb Q}_p)^m$, we obtain
$$\beta_i=(\omega-y)/p^h,$$
 and for $k\ne i$,  equation \eqref{equ_k} still holds.
Since $u_t, \beta_i\in {\Bbb Z}_p$, and in \eqref{equ_k}, the order of the coefficient of $\beta_i$ is larger than $0$,
  we deduce that   $\beta_k\in {\Bbb Z}_p$.
Therefore, we have $F^{-1}(a+p^nu) \in V_1^{(y)}\times \ldots \times V_m^{(y)}$.
\end{proof}

For a set $A\subset (p{\Bbb Z}_p)^m$ we define $diam(A):=\max\{|x-y|_p\mid x,y\in A\}$.

\begin{lem}\label{l4}
Let  $n\in {\Bbb Z_{>0}}$ and denote  $t=\sigma^{-1}(k)$.
Then,
\begin{enumerate}
\item[(1)]
For $x=(x_1,\ldots,x_m)\in (p{\Bbb Z}_p)^m$,\\
$F^{-1}: x+(p^n{\Bbb Z}_p)^m  \to F^{-1}(x+(p^n{\Bbb Z}_p)^m)$ is a bijection.
\item[(2)]
$\text{diam}(F^{-1}(x+(p^n{\Bbb Z}_p)^m))\leq p^{-1}\text{diam}(x+(p^n{\Bbb Z}_p)^m)$.
\item[(3)]
$d\mu_m(F^{-1}(x))=\left ( {p^{mv+(m+1)u-\sum_{t\ne s}(ord(p_t))}} \right )^{-1}d\mu_m(x)$.
\end{enumerate}
\end{lem}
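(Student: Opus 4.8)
\textbf{Proof proposal for Lemma \ref{l4}.}

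The plan is to read off all three statements from the explicit description of $F^{-1}(x+(p^n{\Bbb Z}_p)^m)$ furnished by Lemma \ref{l3}, namely
$$
F^{-1}(x+(p^n{\Bbb Z}_p)^m)=\bigsqcup_{0\leq y<p^h}V_1^{(y)}\times\cdots\times V_m^{(y)},
$$
together with the Hyperbolicity Lemma \ref{l01} for the diameter bound. For part (2), since $F^{-1}$ is the (restriction of the) map defined by \eqref{eq:x_k} and Lemma \ref{l01} gives $|F^{-1}(x)-F^{-1}(y)|_p\leq p^{-1}|x-y|_p$ for all $x,y\in(p{\Bbb Z}_p)^m$, we get immediately that the image of a set of diameter $D$ has diameter at most $p^{-1}D$; applied to $x+(p^n{\Bbb Z}_p)^m$, whose diameter is $p^{-(n+1)}$, this is the claim. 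For part (1), injectivity of $F^{-1}$ on all of $(p{\Bbb Z}_p)^m$ follows from Lemma \ref{l-2} (it is a genuine inverse on $D_m'$, and the formula \eqref{eq:x_k} extends it injectively—indeed the contraction estimate in Lemma \ref{l01} already forces injectivity), and surjectivity onto its image is tautological; so $F^{-1}$ restricted to $x+(p^n{\Bbb Z}_p)^m$ is a bijection onto $F^{-1}(x+(p^n{\Bbb Z}_p)^m)$.

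The substantive point is part (3), the Jacobian-type formula. I would argue as follows. Fix $x=(x_1,\dots,x_m)\in(p{\Bbb Z}_p)^m$ and compute $\mu_m\big(F^{-1}(x+(p^n{\Bbb Z}_p)^m)\big)$ for $n$ large using the decomposition from Lemma \ref{l3}. Each piece $V_1^{(y)}\times\cdots\times V_m^{(y)}$ is a product of balls (cylinders) in $p{\Bbb Z}_p$, and there are $p^h$ of them. From the definitions of the $V_k^{(y,n,a)}$ with $a=x$: the $i$-th factor is a translate of $p^{n+v+2u}(y+p^h{\Bbb Z}_p)$, hence a ball of radius $p^{-(n+v+2u+h)}$, so $\mu(V_i^{(y)})=p^{-(n+v+2u+h-1)}$ (using $\mu(pa+p^N{\Bbb Z}_p)=p^{-(N-1)}$, and noting $V_i^{(y)}\subset p{\Bbb Z}_p$ so $N=n+v+2u+h\geq 1$, keeping track of whether the leading term lies in $p{\Bbb Z}_p$—it does by Lemma \ref{l0}). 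For $k\neq i$ with $t=\sigma^{-1}(k)$, $V_k^{(y)}$ is $\tfrac{1}{p_t}$ times a fixed nonzero constant $c:=\tfrac{p_s}{x_s+q_s}+p^{n+v+2u}y$ times the ball $x_t+q_t+p^n{\Bbb Z}_p$; scaling a ball in ${\Bbb Q}_p$ by a constant $\gamma$ multiplies its Haar measure by $|\gamma|_p$, so $\mu(V_k^{(y)})=|1/p_t|_p\cdot|c|_p\cdot\mu(x_t+q_t+p^n{\Bbb Z}_p)$. Here $|c|_p=|p_s|_p/|x_s+q_s|_p=p^{-v}\cdot p^{u}=p^{u-v}$ (since $ord(q_s)=-u<ord(x_s)$ forces $ord(x_s+q_s)=-u$), $|1/p_t|_p=p^{ord(p_t)}$, and $\mu(x_t+q_t+p^n{\Bbb Z}_p)=p^{-(n-1)}$ provided $x_t+q_t+p^n{\Bbb Z}_p$ is a translate of $p^N{\Bbb Z}_p$ with the normalization as in the definition of $\mu$; one should check the precise power of $p$ bookkeeping here against the normalization $\mu(pa+p^n{\Bbb Z}_p)=p^{-(n-1)}$.

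Multiplying the $m$ factor-measures and summing over the $p^h$ values of $y$, the total is
$$
p^h\cdot p^{-(n+v+2u+h-1)}\cdot\prod_{k\neq i}\Big(p^{ord(p_t)}\cdot p^{u-v}\cdot p^{-(n-1)}\Big),
$$
and collecting exponents (there are $m-1$ indices $k\neq i$, contributing $(m-1)u$, $-(m-1)v$, $\sum_{t\neq s}ord(p_t)$, and $-(m-1)(n-1)$) one finds this equals
$$
\big(p^{mv+(m+1)u-\sum_{t\neq s}ord(p_t)}\big)^{-1}\cdot p^{-m(n-1)}=\big(p^{mv+(m+1)u-\sum_{t\neq s}ord(p_t)}\big)^{-1}\mu_m(x+(p^n{\Bbb Z}_p)^m),
$$
since $\mu_m(x+(p^n{\Bbb Z}_p)^m)=p^{-m(n-1)}$. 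As this holds for every cylinder $x+(p^n{\Bbb Z}_p)^m$ (for all large $n$) with a ratio independent of $x$ and $n$, and such cylinders generate the Borel $\sigma$-algebra, the pushforward $F^{-1}_*\mu_m$ is the constant multiple $\big(p^{mv+(m+1)u-\sum_{t\neq s}ord(p_t)}\big)^{-1}\mu_m$ of $\mu_m$ on the image, which is exactly the asserted identity $d\mu_m(F^{-1}(x))=\big(p^{mv+(m+1)u-\sum_{t\neq s}ord(p_t)}\big)^{-1}d\mu_m(x)$.

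\textbf{Main obstacle.} The only real danger is arithmetic: getting every exponent of $p$ right—in particular the contribution of the $p^h$ summands canceling the $p^{-h}$ from the $i$-th factor, the off-by-one in the Haar normalization $\mu(pa+p^n{\Bbb Z}_p)=p^{-(n-1)}$, and confirming (via Lemma \ref{l0} and Definition \ref{d-1}(ii)) that each $V_k^{(y)}$ really is a ball of the claimed radius sitting inside $p{\Bbb Z}_p$ so the normalization applies verbatim. Parts (1) and (2) are essentially immediate from Lemmas \ref{l-2}, \ref{l01} and \ref{l3}.
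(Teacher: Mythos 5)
Your strategy is exactly the paper's: parts (1) and (2) are read off from Lemma \ref{l-2} and the Hyperbolicity Lemma \ref{l01}, and part (3) is obtained from the decomposition of Lemma \ref{l3} by computing the Haar measure of each factor $V_k^{(y)}$, multiplying over $k$, and summing over the $p^h$ values of $y$. The paper carries out precisely this bookkeeping, with per-factor measures $p^{-(n+v+2u+h-1)}$ for $k=i$ (you have this correctly) and $p^{-(n+v+u-ord(p_t)-1)}$ for $k\neq i$.

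However, your value of the scaling constant for $k\ne i$ has a sign error in the exponent of $u$, and as written your displayed product does not imply the asserted identity. Since $ord(q_s)=-u\leq 0<ord(x_s)$, one has $ord(x_s+q_s)=-u$, hence $|x_s+q_s|_p=p^{u}$ and therefore $|c|_p=|p_s|_p/|x_s+q_s|_p=p^{-v}\cdot p^{-u}=p^{-(u+v)}$, not $p^{u-v}$. With your value, each factor for $k\ne i$ would have measure $p^{-(n+v-u-ord(p_t)-1)}$, too large by $p^{2u}$, and collecting the exponents in your display actually gives $\bigl(p^{mv-(m-3)u-\sum_{t\ne s}ord(p_t)}\bigr)^{-1}p^{-m(n-1)}$, which agrees with the claimed $\bigl(p^{mv+(m+1)u-\sum_{t\ne s}ord(p_t)}\bigr)^{-1}p^{-m(n-1)}$ only when $u=0$ (or $m=1$); so the last equality is not a consequence of the line before it, and for hyperbolic $F$ with $ord(q_s)<0$ (e.g.\ the Ruban-type maps) the discrepancy is real. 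Replacing $p^{u-v}$ by $p^{-(u+v)}$ repairs the computation and makes it coincide with the paper's. One smaller point: the parenthetical claim that the contraction estimate of Lemma \ref{l01} ``already forces injectivity'' is not a valid argument (a $p^{-1}$-Lipschitz map need not be injective); injectivity of $F^{-1}$ on $(p{\Bbb Z}_p)^m$ should instead be read off from the explicit formula \eqref{eq:x_k}, from which $y$ is recovered coordinatewise from $F^{-1}(y)$.
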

\begin{proof}
The proof of (1) is easy.
By Lemma \ref{l01} we have (2).

Now we prove (3).
Let $\alpha=(a_1,\ldots,a_m)\in (p{\Bbb Z}_p)^m$.
By Lemma \ref{l3},
\begin{align*}
F^{-1}(a+(p^n{\Bbb Z}_p)^m)
=\bigsqcup_{0\leq y <p^h}V_1^{(y)}\times \ldots \times V_m^{(y)},
\end{align*}
which implies
\begin{align*}
&\mu_m(F^{-1}(a+(p^n{\Bbb Z}_p)^m ))
=\sum_{0\leq y <p^h}\mu_m(V_1^{(y)}\times \ldots \times V_m^{(y)})\\
=&\sum_{0\leq y <p^h} \left (  {p^{n+v+2u+h-1+\sum_{t\ne s}(n+v+u-ord(p_t)-1)}} \right )^{-1}\\
=& \left ( {p^{mv+(m+1)u-\sum_{t\ne s}(ord(p_t))}} \right )^{-1}\mu_m( a+(p^n{\Bbb Z}_p)^m).
\end{align*}
It follows that $d\mu_m(F^{-1}(x))=\left ( {p^{mv+(m+1)u-\sum_{t\ne s}(ord(p_t))}}\right )^{-1}d\mu_m(x)$. Thus, we have the lemma.
\end{proof}

For  a hyperbolic $m$-dimensional LFT  $F$ with parameter $(i,\sigma, \bp,\bq)$,
we define
\begin{equation}
\iota(F):=p^{mv+(m+1)u-\sum_{t\ne s}(ord(p_t))}.
\end{equation}

\begin{thm}\label{t1}
Let  $\{F_{\lambda} \}_{\lambda \in \Lambda}$ be a  $m$-dimensional continued fraction system.
Let $T$ be the transformation  on $D'_m$ associated with  $\{F_{\lambda} \}_{\lambda \in \Lambda}$.
The measure $\mu_m$ is an invariant measure of $T$.
\end{thm}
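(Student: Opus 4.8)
The plan is to establish $T$-invariance of $\mu_m$ by checking the defining condition $\mu_m(T^{-1}(A)) = \mu_m(A)$ on a generating family of sets, namely the cylinders $a + (p^n{\Bbb Z}_p)^m$ with $a \in D_m'$ and $n \geq 1$; since these generate the Borel $\sigma$-algebra and $\mu_m((p{\Bbb Z}_p)^m \setminus D_m') = 0$ by Lemma~\ref{lem:null}, it suffices to verify the identity for such cylinders (intersected with $D_m'$). The key point is that $T$ acts as $F_\lambda$ on the piece $F_\lambda^{-1}(D_m')$, and $\{F_\lambda^{-1}(D_m')\}_{\lambda \in \Lambda}$ is a partition of $D_m'$ by Definition~\ref{d-2}, so
\begin{align*}
T^{-1}\bigl(a + (p^n{\Bbb Z}_p)^m\bigr) = \bigsqcup_{\lambda \in \Lambda} F_\lambda^{-1}\bigl((a + (p^n{\Bbb Z}_p)^m) \cap D_m'\bigr)
\end{align*}
up to a $\mu_m$-null set, the union being disjoint because the images $F_\lambda^{-1}(D_m')$ are disjoint.

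First I would reduce, via Lemma~\ref{lem:null} (which gives $\mu_m(D_m') = \mu_m((p{\Bbb Z}_p)^m)$ on every cylinder), to computing $\sum_{\lambda} \mu_m(F_\lambda^{-1}(a + (p^n{\Bbb Z}_p)^m))$. By Lemma~\ref{l4}(3), each term equals $\iota(F_\lambda)^{-1} \mu_m(a + (p^n{\Bbb Z}_p)^m)$, so the whole sum is $\bigl(\sum_\lambda \iota(F_\lambda)^{-1}\bigr) \mu_m(a + (p^n{\Bbb Z}_p)^m)$. Thus the invariance $\mu_m(T^{-1}(A)) = \mu_m(A)$ reduces to the single scalar identity
\begin{align*}
\sum_{\lambda \in \Lambda} \iota(F_\lambda)^{-1} = 1.
\end{align*}
This in turn follows by applying the partition property directly to a full cylinder: taking $A = (p{\Bbb Z}_p)^m$ (or equivalently $D_m'$), we have $\mu_m(D_m') = \sum_\lambda \mu_m(F_\lambda^{-1}(D_m')) = \sum_\lambda \iota(F_\lambda)^{-1}\mu_m(D_m')$ by Lemma~\ref{l4}(3) applied in the limit over cylinders (or to the set $D_m'$ written as a countable union of cylinders up to null sets), and since $\mu_m(D_m') = 1 > 0$ we can cancel it.

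The main obstacle, and the step requiring care, is the measure-theoretic bookkeeping: Lemma~\ref{l4}(3) is stated as a Radon-Nikodym-type identity $d\mu_m(F^{-1}(x)) = \iota(F)^{-1}d\mu_m(x)$, and I need to make sure it legitimately yields $\mu_m(F_\lambda^{-1}(B)) = \iota(F_\lambda)^{-1}\mu_m(B)$ for the relevant measurable sets $B \subset (p{\Bbb Z}_p)^m$ — this is immediate for cylinders from the proof of Lemma~\ref{l4} and extends to all Borel sets by the standard $\pi$-$\lambda$ / monotone class argument since cylinders generate. I also need the countable additivity of $\mu_m$ across the partition pieces (which is automatic, $\Lambda$ being countable) and the fact that $F_\lambda^{-1}$ maps $D_m'$ into $D_m'$ (Lemma~\ref{l0}), so that $T^{-1}$ genuinely decomposes as claimed. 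Once these routine verifications are in place, the identity $\sum_\lambda \iota(F_\lambda)^{-1} = 1$ drops out of the partition hypothesis and invariance follows; I would then conclude that $\mu_m$ restricted to $D_m'$ (hence $\mu_m$ itself, by Lemma~\ref{lem:null}) is $T$-invariant.
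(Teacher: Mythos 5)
Your proposal is correct and follows essentially the same route as the paper: both decompose $T^{-1}$ of a cylinder over the partition $\{F_\lambda^{-1}(D_m')\}_{\lambda\in\Lambda}$, apply Lemma~\ref{l4}(3) termwise, and use the partition of $D_m'$ itself to obtain the normalization $\sum_{\lambda}\iota(F_\lambda)^{-1}=1$. The only difference is that you spell out the routine measure-theoretic extensions (null sets via Lemma~\ref{lem:null}, passing from cylinders to Borel sets) that the paper leaves implicit.
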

\begin{proof}
Since $\displaystyle D'_m=\bigsqcup_{\lambda \in \Lambda}F_{\lambda}^{-1}(D'_m)$,
by Lemma \ref{l4} we have
\begin{align}\label{eq=1sum}
1=\sum_{\lambda \in \Lambda}\dfrac{1}{\iota(F_{\lambda})}.
\end{align}
Let $(a_1,\ldots,a_m)\in (p{\Bbb Z}_p)^m$ and $n\in {\Bbb Z}_{>0}$.
By Lemma \ref{l4}, we have
\begin{align*}
&\mu_m(T^{-1}(a+(p^n{\Bbb Z}_p)^m))
= \mu_m\left(\bigsqcup_{\lambda \in \Lambda}F_{\lambda}^{-1}( a+(p^n{\Bbb Z}_p)^m)\right)\\
=&\sum_{\lambda \in \Lambda}\mu_m(F_{\lambda}^{-1}( a+(p^n{\Bbb Z}_p)^m))
=\left(\sum_{\lambda \in \Lambda}\dfrac{1}{\iota(F)}\right)\mu_m( a+(p^n{\Bbb Z}_p)^m)\\
=&\mu_m( a+(p^n{\Bbb Z}_p)^m).
\end{align*}
Since $T$ is invariant on every cylinder, we obtain the theorem.
\end{proof}

A set $X\subset D_m'$ is said to be $T$-invariant if $X$ is measurable and $X=T^{-1}(X)$. We use $1_X$ to denote the characteristic function of $X$.

\begin{lem}\label{l5}
Let  $\{F_{\lambda} \}_{\lambda \in \Lambda}$ be an  $m$-dimensional continued fraction system.
Let $T$ be the transformation  on $D'_m$ associated with  $\{F_{\lambda} \}_{\lambda \in \Lambda}$ and $n\in {\Bbb Z}_{>0}$.
Let $a=(a_1,\ldots,a_m)\in D_m'$ and $A=(a+(p^n{\Bbb Z}_p)^m)\cap D_m'$.
 Let  $\lambda'\in \Lambda$ and
$B=F_{\lambda'}^{-1}(A)$. Let $X$ be an $T$-invariant subset of $D_m'$.
Then,
\begin{equation}\label{eq:AB}
 \dfrac{\mu_m(A\cap X)}{\mu_m(A)}=\dfrac{\mu_m(B\cap X)}{\mu_m(B)}.
 \end{equation}
Consequently, \eqref{eq:AB} holds for any Borel set $A\subset D_m'$ and $B=F_{\lambda'}^{-1}(A)$.

\end{lem}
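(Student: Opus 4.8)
The plan is to first establish the identity \eqref{eq:AB} for cylinder-type sets $A = (a + (p^n{\Bbb Z}_p)^m) \cap D_m'$ and $B = F_{\lambda'}^{-1}(A)$, and then bootstrap to arbitrary Borel sets $A \subset D_m'$ by a standard measure-theoretic extension argument. For the cylinder case, the key observation is that $F_{\lambda'}^{-1}$ scales $\mu_m$ by the constant factor $1/\iota(F_{\lambda'})$ on all of $(p{\Bbb Z}_p)^m$ (Lemma \ref{l4}(3)), so in particular $\mu_m(B) = \mu_m(A)/\iota(F_{\lambda'})$; the content of \eqref{eq:AB} is then the matching statement $\mu_m(B \cap X) = \mu_m(A \cap X)/\iota(F_{\lambda'})$. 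Since $X$ is $T$-invariant and $B = F_{\lambda'}^{-1}(A) \subset F_{\lambda'}^{-1}(D_m')$, on which $T$ acts as $F_{\lambda'}$, we have $B \cap X = F_{\lambda'}^{-1}(A) \cap T^{-1}(X) = F_{\lambda'}^{-1}(A) \cap F_{\lambda'}^{-1}(X) = F_{\lambda'}^{-1}(A \cap X)$. Applying Lemma \ref{l4}(3) again to the set $A \cap X$ (approximating it from within by finite unions of small cylinders, on each of which the scaling factor is $1/\iota(F_{\lambda'})$) gives $\mu_m(B \cap X) = \mu_m(A \cap X)/\iota(F_{\lambda'})$, and dividing yields \eqref{eq:AB}.

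First I would make precise the statement that $F_{\lambda'}^{-1}$ multiplies $\mu_m$ by $1/\iota(F_{\lambda'})$ as a genuine change-of-measure statement: for every Borel set $E \subset (p{\Bbb Z}_p)^m$ one has $\mu_m(F_{\lambda'}^{-1}(E)) = \mu_m(E)/\iota(F_{\lambda'})$. This follows from Lemma \ref{l4}(3) because that lemma shows the equality holds for every cylinder $a + (p^n{\Bbb Z}_p)^m$, these cylinders generate the Borel $\sigma$-algebra and form a semiring, and both sides are finite measures; a uniqueness-of-extension (Carathéodory/Dynkin) argument then forces equality on all Borel sets. Note $F_{\lambda'}^{-1}$ is continuous hence Borel-measurable (indeed it is given by the explicit formula \eqref{eq:x_k} extended to $(p{\Bbb Z}_p)^m$), so preimages of Borel sets are Borel.

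With that in hand, the cylinder case of \eqref{eq:AB} is immediate as sketched above, using $B \cap X = F_{\lambda'}^{-1}(A \cap X)$. For the final sentence of the lemma, fix $\lambda'$ and consider the two set functions $A \mapsto \mu_m(F_{\lambda'}^{-1}(A) \cap X)$ and $A \mapsto \mu_m(F_{\lambda'}^{-1}(A))$ on Borel subsets $A$ of $D_m'$; both are finite measures (by the previous paragraph the second is just $\mu_m|_{D_m'}/\iota(F_{\lambda'})$, hence finite, and the first is dominated by it), and we have just shown they agree — up to the common ratio — on the generating semiring of cylinders intersected with $D_m'$. Hence the identity \eqref{eq:AB}, rewritten as $\mu_m(F_{\lambda'}^{-1}(A)\cap X)\,\mu_m(A) = \mu_m(A\cap X)\,\mu_m(F_{\lambda'}^{-1}(A))$, holds for all cylinders and therefore, by the monotone class theorem, for all Borel $A \subset D_m'$.

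The main obstacle is bookkeeping rather than conceptual: one must be careful that $A \cap X$ and $B \cap X$ need not be cylinders, so Lemma \ref{l4}(3) cannot be applied to them directly; it must first be upgraded to the change-of-measure statement for all Borel sets, and one must verify that the constant $\iota(F_{\lambda'})$ is the same whether one scales $\mu_m(A)$ or $\mu_m(A\cap X)$ — which it is, since the scaling factor in Lemma \ref{l4}(3) depends only on $F_{\lambda'}$ and not on the set. One should also take care that all sets in sight are measurable: $X$ is measurable by hypothesis, $A$ and $B$ are Borel, and $F_{\lambda'}^{-1}$ is Borel measurable, so $A \cap X$, $B \cap X = F_{\lambda'}^{-1}(A\cap X)$ are all Borel-measurable and everything is well defined.
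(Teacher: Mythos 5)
Your proposal is correct and follows essentially the same route as the paper: the identity $B\cap X=F_{\lambda'}^{-1}(A\cap X)$ obtained from $T$-invariance is exactly the set-level form of the paper's indicator computation $1_X(F_{\lambda'}^{-1}(\alpha))=\sum_{\lambda}1_{F_{\lambda}^{-1}(X)}(F_{\lambda'}^{-1}(\alpha))=1_X(\alpha)$ for $\alpha\in A\subset D_m'$, and your scaling by the constant factor $\iota(F_{\lambda'})^{-1}$ is the paper's change of variables $d\mu_m(F_{\lambda'}^{-1}(\alpha))=\iota(F_{\lambda'})^{-1}d\mu_m(\alpha)$ from Lemma \ref{l4}(3). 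The only difference is presentational: you make explicit (via uniqueness of extension from the cylinder $\pi$-system) the upgrade of Lemma \ref{l4}(3) from cylinders to arbitrary Borel sets, a step the paper leaves implicit in its differential notation, which also renders your final monotone-class paragraph redundant.
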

\begin{proof} Since $X$ is $T$-invariant and $T^{-1}(X)=\sqcup_{\lambda\in \Lambda} F_\lambda^{-1}(X)$, we have
$$1_X(\alpha)=1_{T^{-1}X}(\alpha)=\sum_{\lambda\in \Lambda} 1_{F_\lambda^{-1}(X)}(\alpha).$$
Hence,
\begin{align*}
\mu_m(B\cap X)&=\int_B1_{X}(\beta)d\mu_m(\beta)\\
&=\int_A1_{X}(F_{\lambda'}^{-1}(\alpha))d\mu_m(F_{\lambda'}^{-1}(\alpha))\\
&=\int_A \sum_{\lambda\in \Lambda} 1_{F_\lambda^{-1}(X)}(F_{\lambda'}^{-1}(\alpha))d\mu_m(F_{\lambda'}^{-1}(\alpha))\\
&=\dfrac{1}{\iota(F_{\lambda'})}\int_A1_{X}(\alpha)d\mu_m(\alpha)\quad \quad \text{(By Lemma \ref{l4})}\\
&=\dfrac{1}{\iota(F_{\lambda'})}\mu_m(A\cap X).
\end{align*}
Again by  Lemma \ref{l4}, we have
\begin{align*}
\dfrac{\mu_m(A\cap X)}{\mu_m(B\cap X)}=\iota(F_{\lambda'})=\dfrac{\mu_m(A)}{\mu_m(B)},
\end{align*}
which implies the lemma.
\end{proof}

\begin{defn}
Let  $\{F_{\lambda} \}_{\lambda \in \Lambda}$ be a  $m$-dimensional continued fraction system.
For $\lambda_1,\ldots,\lambda_n \in \Lambda$ we define
$$\xi(\lambda_1,\ldots,\lambda_n):=F^{-1}_{\lambda_1}\cdots F^{-1}_{\lambda_n}(D_m').$$.
\end{defn}

\begin{lem}\label{l6}
Let  $\{F_{\lambda} \}_{\lambda \in \Lambda}$ be a  $m$-dimensional continued fraction system.
Then,
\begin{enumerate}
\item
For each $n\in {\Bbb Z}_{\geq 0}$
$\displaystyle D_m'=\bigsqcup_{\lambda_1,\ldots \lambda_n \in \Lambda}\xi(\lambda_1,\ldots,\lambda_n)$.
\item
For  $n\in {\Bbb Z}_{\geq 0}$ and $\lambda_1,\ldots, \lambda_n \in \Lambda$,
$$\text{diam}(\xi(\lambda_1,\ldots,\lambda_n))\leq \dfrac{1}{p^{n+1}}.$$

\item If $X$ is $T$-invariant, then
$$\frac{\mu_m(\xi(\lambda_1,\ldots,\lambda_n)\cap X)}{\mu_m(\xi(\lambda_1,\ldots,\lambda_n))}=\mu_m(X).$$
\end{enumerate}
\end{lem}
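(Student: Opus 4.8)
The plan is to prove the three assertions of \lemref{l6} in order, using the continued fraction system axioms together with the \lemref{l01}, \thmref{t1}, and \lemref{l5} established above.

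First, for part (1), I would argue by induction on $n$. The base case $n=0$ reads $D_m'=D_m'$, which is trivial. For the inductive step, suppose $D_m'=\bigsqcup_{\lambda_1,\dots,\lambda_{n-1}}\xi(\lambda_1,\dots,\lambda_{n-1})$. By \defnref{d-2}, $\{F_\mu^{-1}(D_m')\mid \mu\in\Lambda\}$ is a partition of $D_m'$; applying $F^{-1}_{\lambda_1}\cdots F^{-1}_{\lambda_{n-1}}$ to this partition and noting that each $F^{-1}_{\lambda_j}$ is injective on $(p\Bbb Z_p)^m$ (by the \lemref{l01} it is even a contraction, hence injective), the images remain pairwise disjoint. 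Since $\xi(\lambda_1,\dots,\lambda_{n-1})=F^{-1}_{\lambda_1}\cdots F^{-1}_{\lambda_{n-1}}(D_m')=\bigsqcup_{\lambda_n}F^{-1}_{\lambda_1}\cdots F^{-1}_{\lambda_{n-1}}F^{-1}_{\lambda_n}(D_m')=\bigsqcup_{\lambda_n}\xi(\lambda_1,\dots,\lambda_n)$, substituting into the inductive hypothesis gives the claim.

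For part (2), I would again induct on $n$, using the \lemref{l01}. For $n=0$, $\xi(\lambda_1)=F^{-1}_{\lambda_1}(D_m')\subset F^{-1}_{\lambda_1}((p\Bbb Z_p)^m)$; since $\text{diam}((p\Bbb Z_p)^m)=p^{-1}$ and $F^{-1}_{\lambda_1}$ contracts distances by at least $p^{-1}$, we get $\text{diam}(\xi(\lambda_1))\le p^{-2}=p^{-(0+1)-1}$. Wait — I should be careful with the indexing: the claim for general $n$ involves $n+1$ compositions $F^{-1}_{\lambda_1}\cdots F^{-1}_{\lambda_n}$? No: $\xi(\lambda_1,\dots,\lambda_n)$ is defined with exactly $n$ inverse maps. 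For $n=0$ the empty composition gives $\xi=D_m'$ with diameter $p^{-1}\le p^{-1}$, matching the bound $p^{-(n+1)}$ at $n=0$. For the step, $\xi(\lambda_1,\dots,\lambda_n)=F^{-1}_{\lambda_1}(\xi(\lambda_2,\dots,\lambda_n))$, and by \lemref{l01} this has diameter at most $p^{-1}\cdot\text{diam}(\xi(\lambda_2,\dots,\lambda_n))\le p^{-1}\cdot p^{-n}=p^{-(n+1)}$ by the inductive hypothesis applied to the $(n-1)$-tuple — here I use that $\text{diam}(\xi(\lambda_2,\dots,\lambda_n))\le p^{-((n-1)+1)}=p^{-n}$.

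For part (3), the key input is \lemref{l5}. Applying \lemref{l5} iteratively, with $A$ taken successively to be $\xi(\lambda_2,\dots,\lambda_n)$, then $\xi(\lambda_3,\dots,\lambda_n)$, and so on, and using that each application preserves the ratio $\mu_m(\,\cdot\cap X)/\mu_m(\,\cdot\,)$ under pulling back by one $F^{-1}_{\lambda_j}$, I obtain
\begin{equation*}
\frac{\mu_m(\xi(\lambda_1,\dots,\lambda_n)\cap X)}{\mu_m(\xi(\lambda_1,\dots,\lambda_n))}=\frac{\mu_m(\xi(\lambda_n)\cap X)}{\mu_m(\xi(\lambda_n))}=\frac{\mu_m(F^{-1}_{\lambda_n}(D_m')\cap X)}{\mu_m(F^{-1}_{\lambda_n}(D_m'))}=\frac{\mu_m(D_m'\cap X)}{\mu_m(D_m')}=\mu_m(X),
\end{equation*}
where the last equality uses $\mu_m(D_m')=1$ from \lemref{lem:null} and $X\subset D_m'$. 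The main subtlety — and the step I expect to require the most care — is ensuring the hypotheses of \lemref{l5} are met at each stage: namely that the sets $\xi(\lambda_{j},\dots,\lambda_n)$ are Borel subsets of $D_m'$ (which follows from part (1) and the fact that they are obtained from $D_m'$ by finitely many applications of the bi-measurable maps $F^{-1}_\lambda$), and that $X$ remains $T$-invariant throughout (which is a hypothesis, not affected by the iteration). The extension of \lemref{l5} to arbitrary Borel $A$, already recorded in that lemma's statement, is what licenses taking $A=\xi(\lambda_j,\dots,\lambda_n)$ rather than just a single cylinder.
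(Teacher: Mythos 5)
Your argument is correct and essentially the paper's: parts (1) and (2) follow from the partition property of the system together with the contraction estimate (the paper invokes \lemref{l4} repeatedly), and part (3) is the same induction via \lemref{l5}, peeling off one map at a time until only $A=D_m'$ remains. The only repair needed is your parenthetical claim that $F_{\lambda}^{-1}$ is injective \emph{because} it is a contraction --- that implication is false in general (constant maps are contractions); injectivity instead comes from \lemref{l-2}, which makes each $F_{\lambda}$ a bijection on $D_m$, or directly from the explicit inverse formula \eqref{eq:x_k}.
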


\begin{proof} Using Lemma \ref{l4} repeatedly,  we have (1) and (2).

Now we prove (3) by induction on $n$. For $n=1$, set $A=D_m'$ in Lemma \ref{l5}, we have
$$\frac{\mu_m(\xi(\lambda_1)\cap X)}{\mu_m(\xi(\lambda_1))}=\dfrac{\mu_m(X)}{\mu_m(D_m')}=\mu_m(X).$$
Hence, the conclusion holds since
$$ \frac{\mu_m(\xi(\lambda_1,\ldots,\lambda_n)\cap X)}{\mu_m(\xi(\lambda_1,\ldots,\lambda_n))}=\frac{\mu_m(\xi(\lambda_1,\ldots,\lambda_{n-1})\cap X)}{\mu_m(\xi(\lambda_1,\ldots,\lambda_{n-1}))}.$$
\end{proof}

\begin{thm}\label{t3}
Let  $\{F_{\lambda} \}_{\lambda \in \Lambda}$ be an $m$-dimensional continued fraction system.
Let $T$ be the transformation  on $D'_m$ associated with  $\{F_{\lambda} \}_{\lambda \in \Lambda}$.
The measure $\mu_m$ is an ergodic measure of the transformation $T$.
\end{thm}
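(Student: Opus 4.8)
The plan is to establish ergodicity via the classical density/Lebesgue-point argument, using the cylinder sets $\xi(\lambda_1,\dots,\lambda_n)$ as a Vitali-type basis whose diameters shrink to zero (Lemma~\ref{l6}(2)) and on which an $T$-invariant set has constant relative measure (Lemma~\ref{l6}(3)). Let $X$ be an arbitrary $T$-invariant measurable subset of $D_m'$ with $\mu_m(X)>0$; the goal is to show $\mu_m(X)=1$ (equivalently $\mu_m(D_m'\setminus X)=0$, recalling $\mu_m(D_m')=1$ by Lemma~\ref{lem:null}).

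First I would observe that the collection of all cylinders $\{\xi(\lambda_1,\dots,\lambda_n):n\ge 0,\ \lambda_i\in\Lambda\}$ together with Lemma~\ref{l6}(1) forms, for each fixed $n$, a countable partition of $D_m'$ into sets of diameter at most $p^{-(n+1)}$. Since the $p$-adic metric is an ultrametric, these cylinders are actual metric balls (or unions controlled by balls), so they generate the Borel $\sigma$-algebra and form a differentiation basis: for $\mu_m$-a.e.\ point $\alpha\in D_m'$, if $C_n(\alpha)$ denotes the unique level-$n$ cylinder containing $\alpha$, then
\[
\lim_{n\to\infty}\frac{\mu_m\bigl(C_n(\alpha)\cap X\bigr)}{\mu_m\bigl(C_n(\alpha)\bigr)}=1_X(\alpha)
\]
for a.e.\ $\alpha$, by the Lebesgue density theorem (valid in this doubling metric-measure setting, or provable directly by a martingale-convergence argument since the $C_n$ form a refining sequence of partitions and $\mu_m(C_n(\alpha))\to 0$). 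On the other hand, Lemma~\ref{l6}(3) asserts that for \emph{every} cylinder $\xi(\lambda_1,\dots,\lambda_n)$ we have $\mu_m(\xi(\lambda_1,\dots,\lambda_n)\cap X)/\mu_m(\xi(\lambda_1,\dots,\lambda_n))=\mu_m(X)$, a constant independent of $n$ and of the indices. Combining the two, the density of $X$ at a.e.\ point equals $\mu_m(X)$; but the density also equals $1_X(\alpha)\in\{0,1\}$ a.e. Hence $\mu_m(X)\in\{0,1\}$, which is exactly ergodicity.

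The main obstacle I anticipate is justifying the differentiation/density step rigorously in the $p$-adic product space $(p\mathbb{Z}_p)^m$: one must check that the refining sequence of cylinder partitions indexed by level $n$ indeed has mesh tending to $0$ (this is Lemma~\ref{l6}(2)) and that it separates points, so that the conditional expectations $\mathbb{E}[1_X\mid \mathcal{F}_n]$, where $\mathcal{F}_n$ is the $\sigma$-algebra generated by the level-$n$ cylinders, converge a.e.\ to $1_X$ by the martingale convergence theorem. This is clean precisely because the cylinders are nested and their generated $\sigma$-algebras increase to the Borel $\sigma$-algebra; no maximal-inequality machinery is needed. A secondary point to handle carefully is that the $\xi(\lambda_1,\dots,\lambda_n)$ for fixed $n$ genuinely partition $D_m'$ and not merely $(p\mathbb{Z}_p)^m$, but this is guaranteed by the continued fraction system being a partition of $D_m'$ (Definition~\ref{d-2}) and by $F_\lambda^{-1}(D_m')\subset D_m'$, so the induction in Lemma~\ref{l6}(1) stays inside $D_m'$ throughout. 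Once the martingale convergence is in place, the proof is a two-line combination as above.
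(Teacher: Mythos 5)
Your proposal is correct and follows essentially the same route as the paper: both hinge on Lemma~\ref{l6} (the level-$n$ cylinders partition $D_m'$ with diameters at most $p^{-(n+1)}$, and an invariant set $X$ has constant relative measure $\mu_m(X)$ on every cylinder), and then a density argument forces $1_X=\mu_m(X)$ a.e., hence $\mu_m(X)\in\{0,1\}$. The only cosmetic difference is that you differentiate along the cylinder filtration via martingale convergence, while the paper first writes each ball $(a+(p^k{\Bbb Z}_p)^m)\cap D_m'$ as a disjoint union of level-$n$ cylinders and then invokes the Lebesgue density theorem; both steps are valid.
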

\begin{proof}
Let $X$ be a $T$-invariant set.
Let $a=(a_1,\ldots,a_m)\in D_m'$, $k\in {\Bbb Z}_{>0}$ and
$U= (a+(p^k{\Bbb Z}_p)^m)\cap D_m'$.

By Lemma \ref{l6}(1),
\begin{equation}\label{part}
{\cal P}_n=\{\xi(\lambda_1,\dots, \lambda_n);~(\lambda_1,\dots, \lambda_n)\in \Lambda^n\}
\end{equation}
is a partition of $D_m'$. By Lemma \ref{l6}(2), if $n>k$, then the maximum of the diameters of the elements
in  ${\cal P}_n$  is smaller than $p^{-k}$.  So an element in ${\cal P}_n$ is either contained in $U$,
or is disjoint with $U$. Hence, there exists $H\subset \Lambda^n$ such that
$$
U=\bigsqcup_{(\lambda_1,\dots, \lambda_n)\in H}\xi(\lambda_1,\dots, \lambda_n).
$$
 By  Lemma \ref{l6}(3),
for every $(\lambda_1,\dots, \lambda_n)\in \Lambda^n$,
$$\mu_m(X)=\dfrac{\mu_m(\xi(\lambda_1,\ldots, \lambda_n)\cap X)}{\mu_m(\xi(\lambda_1,\ldots, \lambda_n)) } .$$
Therefore, we have
$$\dfrac{\mu_m(U\cap X)}{\mu_m(U)}=\mu_m(X).$$
Hence, by the density Theorem (see for instance, \cite{H}), for  $a.e.$ $x$,  $1_X(x)=\mu_m(X)$, which implies
$\mu_m(X)=0$ or $\mu_m(X)=1$.
We have the theorem.
\end{proof}

Finally, we show that  the systems are strong mixing.

\begin{thm}\label{ad2}
For any Borel subset $A,B$ in $D_m'$
\begin{align}\label{mixing}
\lim_{n\to \infty}\mu(T^{-n}A\cap B)=\mu(A)\mu(B).
\end{align}
\end{thm}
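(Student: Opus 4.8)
The plan is to establish strong mixing by exploiting the structure already in place: the partitions $\mathcal{P}_n$ of cylinders $\xi(\lambda_1,\dots,\lambda_n)$ together with the exact proportionality formula in Lemma \ref{l5}. The key point is that Lemma \ref{l5} gives, for \emph{any} Borel set $A\subset D_m'$ and $B=F_{\lambda'}^{-1}(A)$, the identity $\mu_m(B\cap X)/\mu_m(B)=\mu_m(A\cap X)/\mu_m(A)$ when $X$ is $T$-invariant; but in fact the computation in its proof shows more, namely that $\mu_m(F_{\lambda'}^{-1}(A)\cap Y)=\iota(F_{\lambda'})^{-1}\mu_m(A\cap T(F_{\lambda'}^{-1}(A)\cap Y))$ type relations, and iterating gives control of $\mu_m(T^{-n}A\cap \xi(\lambda_1,\dots,\lambda_n))$.

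First I would reduce to the case where $A$ and $B$ are cylinders of the form $(c+(p^k\mathbb{Z}_p)^m)\cap D_m'$, since these generate the Borel $\sigma$-algebra and finite disjoint unions of them approximate arbitrary Borel sets in measure; by a standard $3\varepsilon$-argument it suffices to prove \eqref{mixing} for such $A,B$. Next, fix such a cylinder $B$ and choose $k$ so that $B$ is a union of level-$k$ cylinders. For $n$ large, I would decompose $D_m'=\bigsqcup_{(\lambda_1,\dots,\lambda_n)}\xi(\lambda_1,\dots,\lambda_n)$, and write
\begin{align*}
\mu_m(T^{-n}A\cap B)=\sum_{(\lambda_1,\dots,\lambda_n)\in H}\mu_m\bigl(T^{-n}A\cap \xi(\lambda_1,\dots,\lambda_n)\bigr),
\end{align*}
where $H$ indexes those cylinders contained in $B$ (possible once $n>k$ by Lemma \ref{l6}(2), exactly as in the proof of Theorem \ref{t3}). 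On each piece $\xi(\lambda_1,\dots,\lambda_n)=F_{\lambda_1}^{-1}\cdots F_{\lambda_n}^{-1}(D_m')$, the restriction of $T^n$ is the bijection $F_{\lambda_n}\cdots F_{\lambda_1}$ onto $D_m'$, and by Lemma \ref{l4}(3) it scales $\mu_m$ by the constant factor $\prod_j\iota(F_{\lambda_j})$, which by \eqref{eq1} is exactly $\mu_m(D_m')/\mu_m(\xi(\lambda_1,\dots,\lambda_n))=1/\mu_m(\xi(\lambda_1,\dots,\lambda_n))$. Hence $\mu_m(T^{-n}A\cap\xi(\lambda_1,\dots,\lambda_n))=\mu_m(A)\,\mu_m(\xi(\lambda_1,\dots,\lambda_n))$ \emph{exactly}, for every $n$ and every index string. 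Summing over $H$ gives $\mu_m(T^{-n}A\cap B)=\mu_m(A)\sum_{H}\mu_m(\xi(\lambda_1,\dots,\lambda_n))=\mu_m(A)\mu_m(B)$ for all $n>k$, which is even stronger than the asymptotic claim.

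The main obstacle is purely bookkeeping rather than conceptual: I must verify that $T^n$ maps $\xi(\lambda_1,\dots,\lambda_n)$ \emph{onto all of} $D_m'$ bijectively (so that the preimage $T^{-n}A\cap\xi(\lambda_1,\dots,\lambda_n)$ really is $(F_{\lambda_n}\cdots F_{\lambda_1})^{-1}(A)$ with the clean Jacobian), and that the measure-scaling is genuinely a constant on each cylinder — both follow by iterating Lemma \ref{l4}, but one must be careful that the cylinder $\xi(\lambda_1,\dots,\lambda_n)$ is nonempty and that the composition of the $F_{\lambda_j}^{-1}$ is defined on all of $D_m'$, which is exactly the content of the continued-fraction-system partition hypothesis (Definition \ref{d-2}) plus Lemma \ref{l0}. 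A minor additional point is the reduction step: to pass from cylinders to arbitrary Borel $A,B$ one needs that the (finite, once $n$ is fixed) sums above are uniformly controlled, which is automatic here since the identity holds with equality, so no error terms accumulate and the $\varepsilon$-approximation of $A$ and $B$ by cylinder unions closes the argument immediately.
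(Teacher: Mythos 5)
Your proof is correct, and it actually yields a slightly stronger conclusion (exact equality $\mu_m(T^{-n}A\cap B)=\mu_m(A)\mu_m(B)$ for every Borel $A$ once $n$ exceeds the level of the ball cylinder $B$). The underlying mechanism is the same as the paper's, namely the constant Jacobian $\mu_m(F_\lambda^{-1}(E))=\iota(F_\lambda)^{-1}\mu_m(E)$ of Lemma~\ref{l4}(3), but the decomposition differs. The paper takes both $A$ and $B$ to be dynamical cylinders $\xi(\tau_1,\dots,\tau_{n_1})$ and $\xi(\phi_1,\dots,\phi_{n_2})$, writes $T^{-n}A\cap B$ as the disjoint union of the concatenated cylinders $\xi(\phi_1,\dots,\phi_{n_2},\lambda_{n_2+1},\dots,\lambda_n,\tau_1,\dots,\tau_{n_1})$, and sums over the free middle block using $\sum_{\lambda}\iota(F_\lambda)^{-1}=1$ (equation \eqref{eq=1sum} from the proof of Theorem~\ref{t1}); general Borel sets are then dispatched by the remark that the family of cylinders generates the Borel algebra. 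You instead keep $A$ arbitrary, take $B$ a ball cylinder, and reuse the device of the ergodicity proof (Theorem~\ref{t3}): by the diameter bound of Lemma~\ref{l6}(2), for $n$ large $B$ is a disjoint union of level-$n$ cylinders, and on each such cylinder $T^n=F_{\lambda_n}\cdots F_{\lambda_1}$ is a bijection onto $D_m'$ with constant measure scaling, giving $\mu_m(T^{-n}A\cap\xi(\lambda_1,\dots,\lambda_n))=\mu_m(A)\,\mu_m(\xi(\lambda_1,\dots,\lambda_n))$ exactly. Your route avoids the telescoping sum and the appeal to \eqref{eq=1sum}, and makes the ``independence of levels'' completely explicit; the paper's route is more symmetric and reduces everything to measures of explicit cylinders. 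Both need the same final approximation step (the $3\varepsilon$-argument, which uses the $T$-invariance of $\mu_m$ from Theorem~\ref{t1}) to reach arbitrary Borel $A,B$; the paper states this step tersely. Two small corrections to your write-up: the identity $\mu_m(\xi(\lambda_1,\dots,\lambda_n))=\prod_j\iota(F_{\lambda_j})^{-1}$ follows from iterating Lemma~\ref{l4}(3) together with $\mu_m(D_m')=1$ (Lemma~\ref{lem:null}), not from \eqref{eq1}; and Lemma~\ref{l5} (which concerns $T$-invariant sets) plays no role in the argument you actually carry out.
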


\begin{proof}
Let
\begin{align}
 \mathcal{P}=\{\xi(\lambda_1,\dots, \lambda_n)\mid n\in {\Bbb Z}_{\geq 0}, (\lambda_1,\dots, \lambda_n)\in \Lambda^n\}.
\end{align}
From Lemma \ref{l3} and \ref{l4}
we see that $\mathcal{P}$  generates the Borel algebra on $D_m'$.
Therefore, to prove the theorem, we only need to show that \eqref{mixing} holds for any $A, B\in {\mathcal P}$. 

Let $A=\xi(\tau_1,\dots, \tau_{n_1}), B=\xi(\phi_1,\dots, \phi_{n_2})$
where $n_1,n_2\in {\Bbb Z}$ and $\tau_i,\phi_j\in \Lambda$ for $1\leq i\leq n_1, 1\leq j\leq n_2$.
We suppose $n\geq n_2$.
Then,
\begin{align*}
&T^{-n}A\cap B\\
&=\left(\bigcup_{(\lambda_1,\dots, \lambda_n)\in \Lambda^n}
\xi(\lambda_1,\dots, \lambda_n,\tau_1,\dots, \tau_{n_1})\right)\cap\xi(\phi_1,\dots, \phi_{n_2})\\
&=\bigcup_{(\lambda_{n_2+1},\dots, \lambda_n)\in \Lambda^{n-n_2}}
\xi(\phi_1,\dots, \phi_{n_2},\lambda_{n_2+1},\dots,\lambda_n,\tau_1,\dots, \tau_{n_1}).\\
\end{align*}
Therefore by Lemma \ref{l4},  we have
\begin{align*}
&\mu(T^{-n}A\cap B)\\
=&\sum_{(\lambda_{n_2+1},\dots, \lambda_n)\in \Lambda^{n-n_2}} \left(\iota(F_{\phi_1})\cdots \iota(F_{\phi_{n_2}})
\iota(F_{\lambda_{n_2+1}})\cdots \iota(F_{\lambda_n})
\iota(F_{\tau_1})\cdots \iota(F_{\tau_{n_1}})\right)^{-1}\\
=&\left(\iota(F_{\phi_1})\cdots \iota(F_{\phi_{n_2}})\iota(F_{\tau_1})\cdots \iota(F_{\tau_{n_1}})\right)^{-1}\\
=&\mu(A)\mu(B),
\end{align*}
which completes the proof of the theorem.
\end{proof}


\noindent
{\bf Acknowledgements}

We thank Zhu Jiang for his earlier contribution on this subject.
We also thank the anonymous referee for valuable comments.

\noindent
Hui Rao: Department of Mathematics and Statistics, Central China Normal University, CHINA\\
{\it E-mail address: hrao@mail.ccnu.edu.cn}\\

\noindent
Shin-ichi Yasutomi: Faculty of Science, Toho University, JAPAN\\
{\it E-mail address: shinichi.yasutomi@sci.toho-u.ac.jp

\end{document}